\newtheorem{theorem}{Theorem}
\newtheorem{proposition}{Proposition}
\newtheorem{assumption}{Assumption}
\colorlet{texcscolor}{blue!50!black}
\colorlet{texemcolor}{red!70!black}
\colorlet{texpreamble}{red!70!black}
\colorlet{codebackground}{black!25!white!25}
\patchcmd\newpage{\vfil}{}{}{}
\newcommand{\weight}{\bm{\theta}}
\newcommand{\itn}{{i=1}^N}
\newcommand{\vek}[1]{#1}
\newcommand{\vq}{\vek{q}}
\newcommand{\vy}{\vek{y}}
\newcommand{\mat}[1]{\bm{#1}}
\newcommand{\rvQ}{Q}
\newcommand{\rvX}{X}
\newcommand{\rvXi}{\varXi}
\newcommand{\rvY}{Y}
\newcommand{\eventalgebra}{\mathfrak{A}}
\newcommand{\eventset}{\varOmega}
\newcommand{\probability}{\mathbb{P}}
\newcommand{\probspace}{(\eventset, \eventalgebra,\probability )}
\newcommand{\expectation}[1]{\operatorname{E}\left[ #1\right]}
\newcommand{\variance}[1]{\operatorname{Var}\left[ #1\right]}
\newcommand{\cov}[1]{\textnormal{Cov}\left[ #1\right]}
\newcommand{\sR}{\mathbb{R}}
\newcommand{\sN}{\mathbb{N}}
\newcommand{\df}{{\textnormal{DF}}}
\newcommand{\dint}{\mathrm{d}}
\renewcommand{\df}{\pi}
\newcommand{\dfQ}{\df_{\rvQ}}
\newcommand{\dfXi}{\df_{\rvXi}}
\newcommand{\dimq}{n}
\newcommand{\dimy}{m}
\newcommand{\dimd}{\delta}
\newcommand{\vd}{\vek{d}}
\title{Scalable method for  {B}ayesian experimental design without integrating over posterior distribution}
\author[1\,*]{Vinh Hoang}
\author[2]{Luis Espath}
\author[3]{Sebastian Krumscheid}
\author[1,4]{Ra\'ul~Tempone}
\affil[1]{Chair of Mathematics for Uncertainty Quantification, RWTH Aachen University Germany.
	}
\affil[2]{Faculty of Science, University of Nottingham, United Kingdom.}
\affil[3]{Steinbuch Center for Computing and Institute for Applied and Numerical Mathematics, Karlsruhe Institute of Technology, 76131 Karlsruhe, Germany.}
\affil[4]{Computer, Electrical and  Mathematical Sciences and Engineering, KAUST,
and  Alexander von Humboldt professor in Mathematics of Uncertainty Quantification, RWTH Aachen University.}
\affil[*]{Corresponding author: hoang.tr.vinh@gmail.com}
\begin{document}
\maketitle


\begin{abstract}
We address the computational efficiency in solving the A-optimal Bayesian design of experiments problems for which the observational map is based on partial differential equations and, consequently, is computationally expensive to evaluate.
A-optimality is a widely used and easy-to-interpret criterion for Bayesian experimental design. 
This criterion seeks the optimal experimental design by minimizing the expected conditional variance, which is also known as the expected posterior variance.
This study presents a novel likelihood-free approach to the A-optimal experimental design that does not require sampling or integrating the Bayesian posterior distribution.
Our proposed approach is developed based on two properties of the conditional expectation: the law of total variance and the property of orthogonal projection.
The expected conditional variance is obtained via the variance of the conditional expectation using the law of total variance, and we take advantage of the orthogonal projection property to approximate the conditional expectation.
We derive an asymptotic error estimation for the proposed estimator of the expected conditional variance and show that the intractability of the posterior distribution does not affect the performance of our approach.
We use an artificial neural network (ANN) to approximate the nonlinear conditional expectation in the implementation of our method.
We then extend our approach for dealing with  the case that the domain of experimental design parameters is continuous by integrating the training process of the ANN into minimizing the expected conditional variance.
Specifically, we propose a nonlocal approximation of the conditional expectation and apply transfer learning to reduce the number of evaluations of the observation model.
Through numerical experiments, we demonstrate that our method greatly reduces the number of observation model evaluations compared with widely used importance sampling-based approaches. 
This reduction is crucial, considering the high computational cost of the observational models. 
Code is available at \href{https://github.com/vinh-tr-hoang/DOEviaPACE}{https://github.com/vinh-tr-hoang/DOEviaPACE}.

\end{abstract}

\tableofcontents


\section{Introduction}\label{sec:intro}

The design of experiments (DOE) aims to systematically plan experiments to collect data and make accurate conclusions about a particular process or system.
 This methodology has wide-ranging applications, such as in engineering \cite{lookman2019active, ilzarbe2008practical}, pharmaceutical \cite{n2017design, singh2005optimizing}, and biological fields \cite{mead2017statistical, bullard2010evaluation}.
Fundamentally, ill-posed problems and uncertainties naturally arise in the DOE.
The Bayesian experimental design provides a general probabilistic framework for dealing with these challenges \cite{chaloner1995bayesian, Huan2013}.
There are several optimality criteria available in the Bayesian experimental design framework,
for example, the A-optimal criterion minimizes the expected conditional variance (ECV), whereas the information gain criterion maximizes the expected Kullback--Leibler divergence between the prior and posterior distributions.
This study focuses on the first criterion, \emph{i.e.}, the A-optimal DOE.
Compared with alternative optimality criteria, the A-optimality criterion is of great practical appeal due to its straightforward interpretation.
Indeed, a reduced posterior variance indicates a reduction in uncertainty.

The Bayesian experimental design generally requires estimating the expected conditional statis\-tical quantities, such as the ECV or the expected information gain (EIG).
Because this task is computationally expensive, many previous works have sought to develop efficient methods for its execution.
Alexanderian et al. \cite{Alexanderian14} proposed a scalable method for solving the A-optimal DOE problem, that was specifically tailored to scenarios in which the observational map is linear.
For a nonlinear observational map, the posterior distribution is typically intractable.
 Although the Markov chain Monte Carlo (MCMC)  algorithm and the approximate Bayesian computation approach  can be used to sample an intractable posterior distribution, these approaches are too computationally intensive for the Bayesian DOE because the problem requires sampling many different posterior distributions for each design candidate \cite{RYAN201526}.
A popular approach to alleviate the computational cost of the Bayesian DOE for nonlinear observational maps is to use a Laplace approximation of the posterior distribution \cite{Rue09, ryan16, bartuska22, bartuska23}.
In \cite{Alexanderian16}, Alexanderian et al. used the Laplace approximation approach in the context of the A-optimal DOE.
In \cite{Beck2018}, Beck et al. used this Laplace approximation approach for estimating the EIG, which was later incorporated with the multilevel method in \cite{Beck2020} and the stochastic gradient descent to find the optimal design in \cite{CARLON2020}.
For a nonlinear observation map, the approaches mentioned above estimate expected conditional quantities using a three-step process: first, they sample the posterior distribution or its approximation for  each observational sample; second, they estimate the posterior characteristic, \emph{e.g.}, the posterior variance or the Kullback--Leibler divergences with respect to (\emph{w.r.t.}) the prior density; and finally, they repeat the previous step for many observational samples to estimate the expected value of the posterior characteristics.
The first two steps are equivalent to solving multiple  inverse problems,
 each of which is computationally expensive, particularly
  for high-dimensional cases,  because the posterior distribution becomes intractable.

This study presents a novel method for estimating the ECV using the projection-based approxi\-mation of the conditional expectation (PACE),  designed to handle computationally expensive observational maps.
The relationship between the uncertainty of the quantities of interest, repre-sented by random variable (RV) $\rvQ$, and their corresponding experimental observations, denoted as RV $\rvY_{\vd}$, is assumed to be $\rvY_{\vd} = h(\rvQ, d) + \rvXi$, where $h$ is the observational map, $\rvXi$ represents measurement error, and $\vd$ denotes the design parameters. 
Using the law of total variance, the ECV of the RV $\rvQ$ given RV $\rvY_{\vd}$, denoted as $\expectation{\variance{\rvQ \mid \rvY_{\vd}}}$,  is obtained by approximating the conditional expectation (CE)  of the RV $\rvQ$ given RV $\rvY_{\vd}$, denoted as $\expectation{\rvQ \mid \rvY_{\vd}}$.
 Specifically, the ECV can be computed as the difference between the variance of $\rvQ$ and the variance of CE $\expectation{\rvQ \mid \rvY_{\vd}}$. 
To approximate $\expectation{\rvQ \mid \rvY_{\vd}}$, we utilize the orthogonality of CE and introduce the map $\phi(\rvY_{\vd})$, which minimizes the mean square error $\expectation{\lVert \rvQ - \phi(\rvY_{\vd}) \rVert_{2}^2}$ under the assumption of the finite variance of both $\rvQ$ and $\rvY_{\vd}$. 
The proposed approach prevents the need to sample or approximate the posterior distribution and eliminates the evaluation of the likelihood function.
Notably, we show that the relative mean absolute error (MAE) of the PACE-based estimator of the ECV is of the order $\mathcal{O}(N^{-2})$, where $N$ represents the number of evaluations of the observational map. Moreover,  the computational efficiency of the proposed approach remains unaffected by the intractability of the posterior.
In a related work, Hoang et al.~\cite{Hoang22} used the PACE approach to develop a  machine learning-based ensemble conditional mean filter (ML-EnCMF) for nonlinear data assimilation problems and demonstrated that the ML-EnCMF outperforms linear filters in terms of accuracy.

To implement our method, we use an artificial neural network (ANN) \cite{Goodfellow2016a, rosenblatt1958perceptron} to approxi\-mate the CE, owing to ANN's versatility and demonstrated effectiveness in handling high-dimensional regression problems.
To deal with continuous design domains, we present a novel algorithm that effectively minimizes the ECV using the stochastic gradient descent method. 
The training process of the ANN  is integrated into the algorithm employed to optimize the design parameters, thereby improving computational efficiency. 
This integration is possible because the objective functions for optimizing the ECV and training the ANN are identical. 
Moreover, we propose a nonlocal approximation of the CE and apply transfer learning to reduce the number of evaluations of the observation model.

The remainder of the paper is structured as follows. In Sec.~\ref{sec:background}, we summarize the background of the Bayesian experimental design. Section~\ref{sec:estimating_tecv_using_pace} details the PACE framework used for the A-optimal DOE and its error estimation. In Section~\ref{sec:linear_setting}, we illustrate our method and verify its error estimation using the linear-Gaussian setting of the DOE. Then, in Section~\ref{sec:continuous_domain}, we examine the continuous design domain scenario and develop a stochastic optimization algorithm to solve the A-optimal DOE. In Section~\ref{sec:eit}, we apply our approach for the optimal design of electrical impedance tomography experiments used to identify the properties of a composite material. Finally, in Section~\ref{sec:conclusion}, we conclude the paper with a summary and perspectives.

\section{Background}\label{sec:background}
In Bayesian experimental design, the uncertainty associated with the quantities of interest is modeled as an RV, denoted here as $\rvQ$ and takes value in $\sR^\dimq$.  We consider a deterministic observational map, denoted as $h$, that is parameterized by a vector of design parameters $\vd$ in a domain $\mathcal{D} \subset \sR^{\dimd}$. This map transforms each vector $\vq \in \sR^\dimq$ into a corresponding noise-free observational vector in $\sR^\dimy$. We consider the scenario in which map $h$ consists of two components: a computationally expensive numerical model denoted as $h_m$, which solves the partial differential equation that governs the experiments, and a measurement operator $h_o$. Typically, $h = h_o \circ h_m$. Assuming that the measurement error is additive, we model the observational RV $\rvY_{\vd}$ for a given vector $\vd$ of the experimental design parameters as
\begin{equation} \label{eq:measurement_operator}
	{\rvY_{\vd}}(\omega) = h (\rvQ(\omega) ,\vd) + \rvXi(\omega),
\end{equation}
where $\rvXi$ is the observational error RV, and $\omega$ denotes an outcome in the sample space $\varOmega$ of the underlying probability space $\probspace$. Further, subscript $\vd$ indicates the dependence of the observational RV $\rvY_{\vd}$ on the design
parameter vector $\vd$. The inverse problem involves updating the prior distribution of the quantities of interest, given the specific measurement data, $\vy$.

Using the Bayesian identification framework is a standard approach to solve the inverse problem owing to the mathematically well-posedness of the framework and its ability to handle uncertainty. Let us assume that the distributions of the RVs $\rvQ$, ${\rvY_{\vd}}$, and $\rvXi$  have finite variances and are absolutely continuous, \emph{i.e.}, their densities exist. For a fixed vector $\vd$, given the prior PDF $\dfQ$ of RV $\rvQ$ and the observational data $\vy$,  the Bayesian posterior PDF $\df_{\rvQ \mid {\rvY_{\vd}}} (\cdot \mid \vy)$ is given as follows:
\begin{equation}\label{eq:bayesrule}
	\df_{\rvQ \mid {\rvY_{\vd}}} (\vq \mid \vy) =
	\dfrac{\dfQ(\vq) \,  \dfXi (\vy - h(\vq, \vd)) }{\df_{{\rvY_{\vd}}}(\vy)},
\end{equation}
where
$\dfXi $ is the density function of RV $\rvXi$,
and $\df_{{\rvY_{\vd}}}(\vy)$ is the evidence given by
\begin{equation}\label{eq:evidence}
	\df_{{\rvY_{\vd}}}(\vy) = \int_{\sR^\dimq}  \dfQ(\vq) \, \dfXi(\vy - h(\vq, \vd))  {}\dint \vq.
\end{equation}

The Bayesian experimental design searches for the parameter vector $\vd$ that minimizes the posterior uncertainty. Here, we  consider the A-optimal DOE, which essentially minimizes the expected posterior variance. The posterior variance can be formulated via the Bayes' rule, Eq.~(\ref{eq:bayesrule}), as
\begin{equation}\label{eq:posterior_variance}
	\variance{\rvQ \mid  \vy; \vd} = \int_{\sR^\dimq} \vq^{\odot 2} \; \df_{\rvQ \vert {\rvY_{\vd}}} (\vq | \vy) \dint \vq
	- \left[\int_{\sR^\dimq} \vq \; \df_{\rvQ \vert {\rvY_{\vd}}} (\vq | \vy) \dint \vq \right]^{\odot 2},
\end{equation}
where $^{\odot}$ denotes the Hadamard square operator, e.g., ${\vq}^{\odot 2} = [q_1^2,\dots,q_{\dimq}^2]^{\top}$.
Modeling the measurement data as an RV, the distribution of the posterior variance is represented  by the conditional variance
$\variance{\rvQ \mid {\rvY_{\vd}}}$,
which is an $\sR^\dimq$-valued RV defined as
\begin{equation}\label{eq:conditonal_variance_bayes}
	\variance{\rvQ \mid {\rvY_{\vd}}}(\omega) \equiv \variance{\rvQ \mid {\rvY_{\vd}}(\omega); \vd}
	\bigr].
\end{equation}

The A-optimal DOE approach seeks the experimental design parameters that minimize the ECV of the RV $\rvQ$ given RV $\rvY_{\vd}$, denoted as $\expectation{\variance{\rvQ \mid  {\rvY_{\vd}}}}$.
In cases where $\dimq > 1$, implying a multidimensional scenario, the A-optimal DOE approach uses the element-wise sum of the ECV as the objective function, which is referred to as the \emph{total ECV} (tECV) in this study.
We denote $\rvQ = [\rvQ_1, ..., \rvQ_{\dimq}]^\top$ where $\rvQ_i$ is the $i$-th component of $\rvQ$.
The tECV $V$ for  a given vector $\vd \in \sR^{\dimd}$ of the experimental design parameters is given by
\begin{equation}\label{eq:totalVar}
	V(\vd)  =	\sum_{i=1}^\dimq	\expectation{ \variance{\rvQ_i \mid  {\rvY_{\vd}}} },
\end{equation}
where $\variance{\rvQ_i \mid {\rvY_{\vd}}}$ is the conditional variance of the $i$-th component of the RV $\rvQ$. 
Alternatively, the tECV can be expressed as the trace of the expected conditional covariance, represented as $V(\vd) \equiv \operatorname{tr} \bigr (\cov{\rvQ \mid \rvY_{\vd} }\bigr)$,  
with $\operatorname{Cov}$ and $\operatorname{tr}$ denoting the covariance and trace operators, respectively.

We obtain the A-optimal DOE by minimizing the tECV, \emph{i.e.}, its parameter vector $\vd_{\text{A}}$ satisfies
\begin{equation}\label{eq:AoptimalED}
	\vd_{\text{A}}  =\; \arg \; \min_{\vd} \; V(\vd).
\end{equation}
By minimizing the tECV, the A-optimal DOE approach attempts to reduce the overall variability of the posterior.

Solving the optimization problem in Eq.~\eqref{eq:AoptimalED} requires an efficient and accurate estimation of the tECV. 
A double-loop algorithm is needed for using the Monte Carlo (MC) method to estimate the tECV based on Eq.~(\ref{eq:totalVar}).
The outer loop samples the RV $\rvY_{\vd}$ , and the inner loop estimates the posterior variance for each sample of the outer loop.
Let $\{\vy^{(i)}\}_{i=1}^{N_{\text{o}}}$ be $N_{\text{o}}$ samples from the outer loop. 
Estimating the posterior variance $\variance{\rvQ \mid \vy^{(i)};\, \vd} $ for each sample $\vy^{(i)}$ involves sampling the posterior distribution $ \df_{\rvQ \vert {\rvY_{\vd}}} (\cdot \mid \vy^{(i)}) $. 
However, the sampling of an intractable posterior distribution poses a significant challenge, \emph{i.e.}, the posterior can be concentrated in considerably smaller regions compared to the prior, especially in high-dimensional problems where $\dimq, \dimy \gg 1$. 
Using the importance sampling (IS) method or the MCMC method to compute the tECV becomes computationally expensive in such cases. The formulation of the  IS double-loop estimator can be found in Appendix~\ref{appendix:ll-approach}.
Enhancing the efficiency of the IS estimator can be done using the Laplace approximation. 
However, this method assumes that the underlying distribution closely resembles a Gaussian distribution and requires that the maximum a posteriori estimator be determined.

Notably, estimating the tECV involves applying the IS method to compute the variances of $N_{\text{o}}$ different posteriors, which results in 
a computational cost of the multiplicative order $N_{\text{o}} \times N_{\text{i}}$, where $N_{\text{i}}$ is the number of samples in the inner loop. Similarly, other approaches that estimate tECV by sampling the posterior, such as the MCMC method, suffer from the same multiplicative growth of computational cost. Herein, we present a novel approach to approximate the tECV via the CE taking into account the orthogonal projection of the CE.
The PACE-based approach, as discussed in  Section~\ref{sec:estimating_tecv_using_pace}, does not require approximating or sampling the posterior distribution.

\section{Estimation of the tECV using PACE}\label{sec:estimating_tecv_using_pace}
We demonstrate that the tECV ($V(\vd)$) can be evaluated by approximating the CE rather than sampling the conditional variance. 
For a given experimental parameter vector $\vd$, the CE of RV $\rvQ$ given RV ${\rvY_{\vd}}$, which is denoted as $\expectation{\rvQ \mid {\rvY_{\vd}}}$, satisfies
\begin{equation}
\int_A \rvQ(\omega)\dint \probability (\omega) =  \int_A \expectation{\rvQ \mid \rvY_{\vd}}(\omega) \dint \probability (\omega),
\end{equation}
for every measurable set $A$ in the $\sigma$-algebra generated by the RV ${\rvY_{\vd}}$.
According to Doob--Dynkin lemma, the CE is a composition of the map $\phi_{\vd} \; : \; \sR^\dimy \rightarrow \sR^\dimq$
and the RV ${\rvY_{\vd}}$ which is given as
\begin{equation}
\expectation{\rvQ \mid {\rvY_{\vd}}} (\omega) = \phi_{\vd} ({\rvY_{\vd}}(\omega) ).
\end{equation}
The map $\phi_{\vd}$ can be defined using the posterior density as
\begin{equation}\label{eq:phi_Bayesian}
\phi_{\vd} (\vy) =  \int_{\sR^\dimq} \vq \; \df_{\rvQ \vert {\rvY_{\vd}}} (\vq , \vy) \dint \vq.
\end{equation}

Alternatively, map $\phi_{\vd}$ can be obtained from the orthogonal projection property.
For the finite-variance RVs $\rvQ$ and $\rvY_{\vd}$, the CE $\expectation{\rvQ \mid {\rvY_{\vd}}}$
is the $L_2$ orthogonal projection of RV $\rvQ$ onto the $\sigma$-algebra generated by the RV ${\rvY_{\vd}}$, which is formulated as
\begin{equation}\label{eq:cm_orthogonal_projection}
	\expectation{\rvQ \mid {\rvY_{\vd}}} = \phi_{\vd}  ({\rvY_{\vd}}), \quad \text{where} \quad \phi_{\vd}  =  \arg \min_{f
		\in \mathcal{S}(\sR^\dimy, \sR^\dimq)} \expectation{\bigl \lVert\rvQ- f \circ {\rvY_{\vd}} \bigr\rVert_{2}^2},
\end{equation}
where $\lVert\cdot\rVert_2$ is the $L_2$ norm. 
Here, $\mathcal{S}(\sR^\dimy,\sR^\dimq)$ represents the set of all functions $f:\sR^\dimy \rightarrow \sR^\dimq$ for which the variance of the RV $f({\rvY_{\vd}})$ is finite.
Appendix~\ref{appendix:orthogonal_projection} provides a proof of the orthogonal projection property.
Theoretical properties of
the CE can be found in \cite{Bobrowski2005a} and in
\cite[Chapter 4]{durrett2019probability}. 
The following proposition provides a formulation for the tECV calculated using the CE $\expectation{\rvQ \mid {\rvY_{\vd}}}$.

\begin{proposition}\label{proposition:tecv_via_conditional_expectation}
The tECV $V(\vd)$ given in Eq.~\eqref{eq:totalVar} can be expressed using
CE $\expectation{\rvQ \mid {\rvY_{\vd}}} $ as:
\begin{equation}\label{eq:theorem1}
	V(\vd) = \expectation{\left\lVert\rvQ -\expectation{\rvQ \mid {\rvY_{\vd}}}\right\rVert_{2}^2}.
\end{equation}
\end{proposition}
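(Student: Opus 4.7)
The plan is to prove Proposition~\ref{proposition:tecv_via_conditional_expectation} by working componentwise and applying the tower property of conditional expectation, then recombining into the $L_2$-norm expression on the right-hand side. There is no real ``hard part'' here — the statement is essentially the law of total variance written coordinate by coordinate — so the main work is organizing the bookkeeping correctly.

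First I would recall the definition of the conditional variance of a scalar RV: for each component $\rvQ_i$ ($i=1,\dots,\dimq$),
\begin{equation*}
\variance{\rvQ_i \mid \rvY_{\vd}} \;=\; \expectation{\bigl(\rvQ_i - \expectation{\rvQ_i \mid \rvY_{\vd}}\bigr)^2 \,\bigm|\, \rvY_{\vd}},
\end{equation*}
which is well-defined because $\rvQ$ is assumed to have finite variance. Taking an (unconditional) expectation of both sides and invoking the tower property $\expectation{\expectation{\,\cdot\, \mid \rvY_{\vd}}} = \expectation{\,\cdot\,}$ gives
\begin{equation*}
\expectation{\variance{\rvQ_i \mid \rvY_{\vd}}} \;=\; \expectation{\bigl(\rvQ_i - \expectation{\rvQ_i \mid \rvY_{\vd}}\bigr)^2}.
\end{equation*}

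Next I would sum these $\dimq$ scalar identities. Using the definition of $V(\vd)$ from Eq.~\eqref{eq:totalVar}, linearity of expectation, and the fact that $\expectation{\rvQ \mid \rvY_{\vd}}$ is the vector whose $i$-th component is $\expectation{\rvQ_i \mid \rvY_{\vd}}$, I obtain
\begin{equation*}
V(\vd) \;=\; \sum_{i=1}^{\dimq} \expectation{\bigl(\rvQ_i - \expectation{\rvQ_i \mid \rvY_{\vd}}\bigr)^2}
\;=\; \expectation{\sum_{i=1}^{\dimq}\bigl(\rvQ_i - \expectation{\rvQ_i \mid \rvY_{\vd}}\bigr)^2}
\;=\; \expectation{\bigl\lVert \rvQ - \expectation{\rvQ \mid \rvY_{\vd}} \bigr\rVert_2^{\,2}},
\end{equation*}
which is exactly Eq.~\eqref{eq:theorem1}.

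The only subtlety worth mentioning in the write-up is justifying the componentwise definition of $\expectation{\rvQ \mid \rvY_{\vd}}$ and the interchange of the finite sum with the expectation; both follow from finite variance of $\rvQ$ (so each $\rvQ_i \in L_2$) and from the Doob--Dynkin representation already invoked in the excerpt. I expect this bookkeeping — rather than any deep argument — to be the only step requiring care.
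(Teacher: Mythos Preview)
Your proof is correct and follows essentially the same componentwise strategy as the paper: both reduce to the scalar identity $\expectation{\variance{\rvQ_i\mid\rvY_{\vd}}}=\expectation{(\rvQ_i-\expectation{\rvQ_i\mid\rvY_{\vd}})^2}$ and then sum over $i$. The paper reaches that scalar identity via the law of total variance (writing $\expectation{\variance{\rvQ_i\mid\rvY_{\vd}}}=\variance{\rvQ_i}-\variance{\expectation{\rvQ_i\mid\rvY_{\vd}}}$ and expanding to $\expectation{\rvQ_i^2-\expectation{\rvQ_i\mid\rvY_{\vd}}^2}$), whereas you obtain it more directly from the definition of conditional variance plus the tower property; your route is slightly cleaner but not materially different.
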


A proof of Proposition~\ref{proposition:tecv_via_conditional_expectation} is given in Appendix~\ref{appendix:proof_totalVar}. Using Proposition~\ref{proposition:tecv_via_conditional_expectation}, the tECV can be estimated by approximating CE $\expectation{\rvQ \mid {\rvY_{\vd}}}$.

\subsection{PACE-based MC estimator of the tECV}\label{sec:pace_tecv}
We leverage the orthogonal projection property described in Eq.~(\ref{eq:cm_orthogonal_projection}) to approximate the CE, eliminating the necessity to sample the posterior distributions as in Eq.~(\ref{eq:phi_Bayesian}).
For a nonlinear observational map $h$, the map $\phi_{\vd}$ is typically nonlinear and does not allow a closed-form solution.
In such cases, we employ the nonlinear regression method to approximate the CE, which involves three key components.
First, we select a suitable parameterized functional subspace of finite dimension $\mathcal{S}'(\sR^\dimy, \sR^\dimq) \subset \mathcal{S}(\sR^\dimy, \sR^\dimq)$. 
This subspace represents a restricted set of functions that are used to approximate the map $\phi_{\vd}$. Second, we solve the projection problem described in Eq.~(\ref{eq:cm_orthogonal_projection}) within this chosen subspace.
Last, we employ an appropriate MC estimator to estimate the MSE $\expectation{\left \lVert\rvQ- f \circ {\rvY_{\vd}} \right \rVert_{2}^2}$ for $f \in \mathcal{S}'$.

By restricting $\mathcal{S}'$ to a linear function space, we obtain a linear approximation of the map $\phi_{\vd}$, as detailed in Appendix~\ref{appendix:linear_approximation}.
However, this linear approximation of the CE is known to be biased and tends to overestimate the tECV. 
The remainder of this section is dedicated to developing a suitable method for approximating the nonlinear CE.

Our primary objective in this section is to approximate the CE given a fixed design parameter vector $\vd$.
We will expand our approach in Section~\ref{sec:continuous_domain} to encompass the neighborhood surrounding a given vector $\vd$.
Let $\weight \in \sR^{\beta}$ denote the vector containing the hyperparameters of functions $f$ in the selected subspace $\mathcal{S}'$.
We use the orthogonal projection property (Eq.~(\ref{eq:cm_orthogonal_projection})) to approximate the map $\phi_{\vd}$ using a function $f^* := f(\cdot\; ; \weight^*)$ in the subspace $\mathcal{S}'$, such that
\begin{equation}\label{eq:approxiamte_theta}
	\weight^* = \arg \; \min_{\weight} \; \expectation{\bigl\lVert  \rvQ - f({\rvY_{\vd}}; \weight)  \bigr\rVert_{2}^2}.
\end{equation}
To simplify the notation, we  introduce the MSE function $\mathcal{M}: \mathcal{S} \rightarrow \sR_{+}$, which is defined as 
\begin{equation}
	\mathcal{M}(f) \equiv \expectation{\bigl\lVert  \rvQ - f({\rvY_{\vd}})  \bigr\rVert_{2}^2}.
\end{equation}
With this notation in place, the relations between functions in $\mathcal{S}$ can be summarized as: 
$$\mathcal{M} (f) \geq \mathcal{M} (f^*) \geq \mathcal{M} (\phi_{\vd}) = V (\vd),\; \forall f\in \mathcal{S'}.$$

A straightforward method for estimating the MSE $\mathcal{M}(f)$ is to use the crude Monte Carlo (MC) estimator.
Let $D_N =  \{\left(\vq^{(i)},\vy^{(i)}\right)\}_{i=1}^N$ be an $N$-sized dataset of \emph{i.i.d.} samples of the RV pairs $(\rvQ, \rvY_{\vd})$, where $\{\vq^{(i)} \}_{i=1}^{N}$ are the \emph{i.i.d.} samples of RV $\rvQ$, and $\{\vy^{(i)} \}_{i=1}^{N}$ are the corresponding \emph{i.i.d.} samples of RV ${\rvY_{\vd}}$, which are obtained as
\begin{equation}\label{eq:samplesY}
	\{\vy^{(i)} \}_{i=1}^{N} = \left\{ \vy^{(i)} \mid \vy^{(i)}
	= h(\vq^{(i)}, \vd) + {\xi}^{(i)},\quad  \vq^{(i)} \in \{\vq^{(i)}\}_{i=1}^{N} \right\}.
\end{equation}
Here, $\{\xi^{(i)}\}_{i=1}^{N}$ are the \emph{i.i.d.} samples of $\rvXi$. 

Let $D_M = \{\left( \vq^{(i)},\vy^{(i)} \right ) \}_{i=1}^M$ be an $M$-sized dataset of \emph{i.i.d.} samples of the RV pairs $(\rvQ, \rvY_\vd)$, which is
statistically independent from $D_N$. We denote the crude MC estimator of the MSE $\mathcal{M}(f)$ using dataset
$D \in \{D_N, D_M\}$ as:
\begin{equation}\label{eq:crude}
	\widehat{\mathcal{M}}(f\mid D) = \dfrac{1}{\vert D \vert} \sum_{(\vq, \vy) \in D}
	\bigl\lVert \vq - f(\vy)\bigr\rVert _2^2,
\end{equation}
where $\vert D \vert$ is the cardinality of dataset $D$.
Owing to Proposition~\ref{proposition:tecv_via_conditional_expectation}, we estimate the tECV using the following \emph{PACE-based MC estimator} $\widehat{V}_{\vd}(D_N, D_M)$:
\begin{subequations}
	\begin{align}
		\widehat{V}_{\vd}(D_N, D_M) &:= \widehat{\mathcal{M}}(f(\cdot;\weight_{D_N}) \mid D_M)\quad \label{eq:mc}\\
		\text{where} \quad \weight_{D_N} &= \arg \; \min_{\weight}
		\;\widehat{\mathcal{M}}(f(\cdot;\weight) \mid D_N), \label{eq:mc_mse}
	\end{align}
\end{subequations}
which requires a numerical solution of the optimization problem stated in Eq.~(\ref{eq:mc_mse}).
We observe that unlike the double-loop IS estimator, which incurs a computational cost proportional to the product $(N_{\text{o}} \times N_{\text{i}})$ as explained in Section~\ref{sec:background}, the PACE-based approach exhibits a linear cost proportional to the sizes of datasets $D_N$ and $D_M$.

Although the PACE-based approach requires approximating the CE, it completely alleviates the need to sample different posterior densities.
This allows us to bypass the complication associated with posterior intractability.
Using the regressor, we obtain approximate values of the CE samples with minimal computing effort.
In the following subsections, we analyze the statistical error of the PACE-based MC estimator of the tECV. Moreover, we propose a control-variate version for this PACE-based MC estimator.

\subsection{Estimation of errors}\label{sec:error_estimation}
In this subsection, we aim to analyze the MAE of the estimator $\widehat{V}_{\vd}(D_N, D_M)$.
Let $\epsilon_{S'}$, $\epsilon_{\text{opt}}$, and $\epsilon_{\text{MC}}$ denote the approximation error due to the choice of the
subspace $\mathcal{S}'$, the optimization error for the numerical solution to the minimization problem
stated in Eq.~(\ref{eq:mc_mse}) with the
finite-size dataset $D_M$,
and the estimating error of the MC estimator due to the finite size
of the dataset $D_N$, respectively, \emph{i.e.},
\begin{align}\label{eq:error_definition}
	\epsilon_{S'} &:=\expectation{\bigl \vert \mathcal{M}(f^*)
		- \mathcal{M}(\phi_{\vd})  \bigr\vert},\\
	\epsilon_{\text{opt}} &:=\expectation{\bigl \lvert \mathcal{M}(f(\cdot;\weight_{D_N}))
		- \mathcal{M}(f^*)  \bigr \rvert},\\
	\epsilon_{\text{MC}} &:=\expectation{\bigl \lvert \widehat{\mathcal{M}}(f(\cdot;\weight_{D_N}) \mid D_M)
		- \mathcal{M}(f(\cdot;\weight_{D_N})) \bigr \rvert}. 
\end{align}

The MAE between the estimator $\widehat{V}_{\vd}(D_N, D_M)$ and $V_{d}$ is bounded using the triangular inequality as follows:
\begin{equation}\label{eq:error_bound}
	\begin{aligned}
		\expectation{\bigl \vert \widehat{V}_{\vd}(D_N, D_M)  -
			V_{\vd} \bigr\vert} = \;&
		\expectation{\bigl \vert \widehat{\mathcal{M}}(f(\cdot;\weight_{D_N}) \mid D_M)
			- \mathcal{M}(\phi_{\vd}) \bigr\vert}\\
		\leq\; &
		\expectation{\bigl \vert \widehat{\mathcal{M}}(f(\cdot;\weight_{D_N}) \mid D_M)
			- \mathcal{M}(f(\cdot;\weight_{D_N})) \bigr\vert}\\
		&+
		\expectation{\bigl \vert \mathcal{M}(f(\cdot;\weight_{D_N}))
			- \mathcal{M}(f^*)  \bigr\vert}\\
		&+
		\expectation{\bigl \vert \mathcal{M}(f^*)
			- \mathcal{M}(\phi_{\vd})  \bigr\vert} \\
		= \;&  \epsilon_{\text{MC}} +  \epsilon_{\text{opt}} + \epsilon_{S'}.
	\end{aligned}
\end{equation}

Aiming at deriving an asymptotic error estimator that captures the error evolution \emph{w.r.t.} the size of the datasets $D_N$ and $D_M$, we make use of the following assumptions:

\begin{assumption}\label{assumption1}
	$\mathcal{S'}$ is a convex set  that contains constant functions.
\end{assumption}

\begin{assumption}\label{assumption2}
	\begin{equation}
		\variance {\left \lVert \rvQ - f^*(\rvY_{\vd})\right \rVert_{2}^2} =
		\mathcal{O}\bigl(2 \expectation { \left \lVert \rvQ - f^*({\rvY_{\vd}})\right \rVert_{2}^2}^2 \bigr)\; < \infty.
	\end{equation}
\end{assumption}

\begin{assumption}\label{assumption3}
	\begin{equation}
		\epsilon_{\text{opt}}
		= \mathcal{O} \left(\expectation{
			\left \lvert \widehat{\mathcal{M}}(f^*\vert D_N) - \mathcal{M}(f^*) \right\rvert}\right).
	\end{equation}
\end{assumption}

The first assumption is a common constraint imposed on the choice of the subspace $\mathcal{S'}$.
This assumption implies that $\expectation{(\rvQ - f^{*}({\rvY_{\vd}}))^\top g({\rvY_{\vd}}))}=0$ for every $g \in \mathcal{S'}$.
By choosing $g$ as a constant function in this expression, we deduce that the mean of the RV $\rvQ - f^{*}({\rvY_{\vd}})$ is zero.
We justify the second assumption by noting that for every zero-mean Gaussian RV $X$, we have $\variance{X^2}= \expectation{X^4} - (\expectation{X^2})^2 = 2(\expectation{X^2})^2$.
The third assumption asserts that the error of the optimization problem under the finite-sized dataset $D_N$ is on the same order as the statistical error of the MC estimator $\widehat{\mathcal{M}}(f^*\vert D_N)$.

\begin{proposition}[Error estimation]\label{proposition:error_estimation}
	Assuming that the RVs $\rvQ$ and ${\rvY_{\vd}}$ have finite variance,
	and that the Assumptions~\ref{assumption1}, \ref{assumption2}, and \ref{assumption3} hold,
	the MAE of the PACE-based MC estimator $\widehat{V}_{\vd}(D_N, D_M)$ satisfies
	\begin{equation}
		\expectation{\bigl \vert \widehat{V}_{\vd}(D_N, D_M)  -
			V_{\vd} \bigr\vert} 
		=
		\mathcal{O}\Biggl( \Bigl(\dfrac{2}{\sqrt{\pi N}} + \dfrac{2}{\sqrt{\pi M}}\Bigr)
		\expectation { \left \lVert \rvQ - f^*(\rvY_{\vd})\right \rVert_{2}^2}\Biggr)
		+ \epsilon_{S'}.
	\end{equation}
	as ${N, M\ \gg 1}$.
\end{proposition}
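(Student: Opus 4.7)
The plan is to start from the triangle-inequality decomposition already derived in Eq.~\eqref{eq:error_bound}, which reduces the task to asymptotically controlling only $\epsilon_{\text{MC}}$ and $\epsilon_{\text{opt}}$, since $\epsilon_{S'}$ already appears verbatim in the claimed estimate. Both residual terms are MAEs of sample-mean estimators of $\mathcal{M}(f)$ for particular choices of $f$, so they yield to the same CLT-based analysis. The explicit constant $2/\sqrt{\pi}$ in the statement is the signature of the Gaussian identity $\expectation{|Z|}=\sigma\sqrt{2/\pi}$ for a centered Gaussian $Z$ of variance $\sigma^2$, which I expect to drive the computation once combined with Assumption~\ref{assumption2}.

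For $\epsilon_{\text{MC}}$, I would first condition on $D_N$ and exploit its independence from $D_M$. With $f(\cdot;\weight_{D_N})$ thereby frozen, the deviation $\widehat{\mathcal{M}}(f(\cdot;\weight_{D_N})\mid D_M) - \mathcal{M}(f(\cdot;\weight_{D_N}))$ is a centered sample mean of $M$ i.i.d.\ square-norm terms, and a CLT approximation gives
\begin{equation*}
\expectation{\bigl| \widehat{\mathcal{M}}(f(\cdot;\weight_{D_N})\mid D_M) - \mathcal{M}(f(\cdot;\weight_{D_N})) \bigr| \mid D_N} \;\sim\; \sqrt{\tfrac{2}{\pi M}}\;\sqrt{\variance{\bigl\lVert \rvQ - f(\cdot;\weight_{D_N})(\rvY_{\vd}) \bigr\rVert_{2}^{2}}}
\end{equation*}
as $M\gg 1$. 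I would then argue that, as $N\gg 1$, the random inner variance converges to $\variance{\lVert\rvQ - f^*(\rvY_{\vd})\rVert_{2}^{2}}$ via Assumption~\ref{assumption1} (which guarantees that $f^*$ is a well-defined minimizer) together with standard consistency of the empirical risk minimizer $\weight_{D_N}\to\weight^*$; Assumption~\ref{assumption2} then rewrites the resulting standard deviation as $\mathcal{O}(\sqrt{2}\,\expectation{\lVert\rvQ-f^*(\rvY_{\vd})\rVert_{2}^{2}})$, producing $\epsilon_{\text{MC}} = \mathcal{O}\!\left(\tfrac{2}{\sqrt{\pi M}}\,\expectation{\lVert\rvQ - f^*(\rvY_{\vd})\rVert_{2}^{2}}\right)$.

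For $\epsilon_{\text{opt}}$, Assumption~\ref{assumption3} replaces $\expectation{|\mathcal{M}(f(\cdot;\weight_{D_N}))-\mathcal{M}(f^*)|}$ by a constant multiple of $\expectation{|\widehat{\mathcal{M}}(f^*\mid D_N) - \mathcal{M}(f^*)|}$, which is itself the MAE of a sample-mean estimator of $\mathcal{M}(f^*)$ at sample size $N$. The identical CLT-plus-Assumption~\ref{assumption2} argument then delivers $\epsilon_{\text{opt}} = \mathcal{O}\!\left(\tfrac{2}{\sqrt{\pi N}}\,\expectation{\lVert\rvQ-f^*(\rvY_{\vd})\rVert_{2}^{2}}\right)$. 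Summing the two bounds and carrying $\epsilon_{S'}$ through the triangle inequality yields the stated estimate.

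I expect the main obstacle to be the $\epsilon_{\text{MC}}$ step, because $\weight_{D_N}$ is itself random and appears nonlinearly inside the variance, whereas the CLT is a statement at a fixed integrand; one therefore has to commute the asymptotics in $M$ with those in $N$. The clean remedy is to observe that Assumption~\ref{assumption1} forces the orthogonality $\expectation{(\rvQ-f^*(\rvY_{\vd}))^{\top} g(\rvY_{\vd})}=0$ for every $g\in\mathcal{S}'$, which both centers $\rvQ-f^*(\rvY_{\vd})$ (by taking $g$ constant) and pins down the limit of the inner variance along $\weight_{D_N}\to\weight^*$. A secondary point is that the CLT approximation is asymptotic, so the entire argument should be stated, as the proposition is, in the regime $N,M\gg 1$.
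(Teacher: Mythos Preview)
Your proposal follows the same architecture as the paper's proof: the triangle decomposition \eqref{eq:error_bound}, a CLT-plus-Assumption~\ref{assumption2} bound for each of $\epsilon_{\text{opt}}$ and $\epsilon_{\text{MC}}$, and the Gaussian identity $\expectation{|Z|}=\sigma\sqrt{2/\pi}$ for the constants. The one substantive difference is the order and the logical dependence between the two sub-steps.

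The paper treats $\epsilon_{\text{opt}}$ \emph{first}. After obtaining $\epsilon_{\text{opt}}=\mathcal{O}\bigl(\tfrac{2}{\sqrt{\pi N}}\,\expectation{\lVert \rvQ-f^*(\rvY_{\vd})\rVert_2^2}\bigr)$ from Assumption~\ref{assumption3} and the CLT, it uses the orthogonality coming from Assumption~\ref{assumption1} to rewrite
\[
\epsilon_{\text{opt}}
=\expectation{\bigl|\mathcal{M}(f(\cdot;\weight_{D_N}))-\mathcal{M}(f^*)\bigr|}
=\expectation{\bigl\lVert f(\rvY_{\vd};\weight_{D_N})-f^*(\rvY_{\vd})\bigr\rVert_2^2},
\]
so the $\epsilon_{\text{opt}}$ bound is itself the quantitative $L^2$-consistency statement $\expectation{\lVert f(\rvY_{\vd};\weight_{D_N})-f^*(\rvY_{\vd})\rVert_2^2}=\mathcal{O}(N^{-1/2})$. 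That is then fed into Step~2 to justify replacing the conditional variance $\variance{\lVert \rvQ-f(\rvY_{\vd};\weight_{D_N})\rVert_2^2\mid\weight_{D_N}}$ by $\variance{\lVert \rvQ-f^*(\rvY_{\vd})\rVert_2^2}$ inside the $\epsilon_{\text{MC}}$ estimate. In your ordering you invoke ``standard consistency of the empirical risk minimizer $\weight_{D_N}\to\weight^*$'' for this swap, but no such consistency is among the stated hypotheses; the paper's point is that it \emph{follows} from Assumptions~\ref{assumption1} and~\ref{assumption3} via the identity above. You do mention the orthogonality at the end, but you should make explicit that it converts $\epsilon_{\text{opt}}$ into exactly the closeness-of-$f(\cdot;\weight_{D_N})$-to-$f^*$ bound you need, so that no extra consistency assumption is required.
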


If we further assume the approximation error  $\epsilon_{S'} =0$, then
\begin{equation}
	\begin{aligned}
		\expectation{\bigl \lvert \widehat{V}_{\vd}(D_N, D_M)  - V_{\vd} \bigr\rvert} 	
		& = \mathcal{O}\Biggl( \Bigl(\dfrac{2}{\sqrt{\pi N}} + \dfrac{2}{\sqrt{\pi M}}\Bigr)
		\expectation { \left \lVert \rvQ - f^*(\rvY_{\vd})\right \rVert_{2}^2}\Biggr) \\
		& = \mathcal{O}\Biggl( \Bigl(\dfrac{2}{\sqrt{\pi N}} + \dfrac{2}{\sqrt{\pi M}}\Bigr)
		\expectation { \left \lVert \rvQ - \phi_{\vd}(Y)\right \rVert_{2}^2}\Biggr) \\
		& = \mathcal{O} \left(\dfrac{2}{\sqrt{\pi N}} + \dfrac{2}{\sqrt{\pi M}}\right) V_{\vd}
	\end{aligned}
\end{equation}
To evaluate the accuracy of an estimator $\widetilde{V}_{\vd}$,  we use the relative MAE defined as
\begin{equation}\label{eq:numMAE}
	\text{relMAE}\; (\widetilde{V}_{\vd}) \equiv \dfrac{\expectation{\vert \widetilde{V}_{\vd}-V_{\vd} \vert}}{V_{\vd}}.
\end{equation}
When $\epsilon_{S'} =0$, the relative MAE of the estimator $\widehat{V}_{\vd}(D_N, D_M)$ is given by
\begin{equation}\label{eq:relMAE_theorectical}
	\begin{aligned}
		\text{relMAE}\; (\widehat{V}_{\vd}) &\equiv \dfrac{\expectation{\bigl \vert \widehat{V}_{\vd}(D_N, D_M)  -
				V_{\vd} \bigr\vert}}{V_{\vd}} \\
		& = \mathcal{O}\Bigl(\dfrac{2}{\sqrt{\pi N}} + \dfrac{2}{\sqrt{\pi M}}\Bigr),
	\end{aligned}
\end{equation}
for $N, M \gg 1$.
When the upper bounds of the errors $\epsilon_{S'}$, $\epsilon_{\text{opt}}$, and $\epsilon_{\text{MC}}$ are available, it is possible to derive more precise bound for the MAE of the PACE-based MC estimator. However, those upper bounds are problem-dependent and beyond the scope of this study.

\subsection{Reduced variance estimator}\label{sec:reduced_variance_estimator}
We often model the measurement error RV $\rvXi$ using simple parameterized distributions such as the Gaussian distribution, Poisson distribution, or binomial distribution. 
Consequently, sampling the RV $\rvXi$ is computationally inexpensive.
Considering this observation, we augment datasets $D_N$ and $D_M$ to reduce the statistical error of the PACE-based MC estimators.
The augmented datasets are obtained as follows:
\begin{align}\label{eq:data_augmented_sets}
	D_N^{\text{rv}} &= \left\{ \left(\vq^{(i)},\vy^{(i, j)} \right ) \mid  \vy^{(i, j)}
	= h(\vq^{(i)}, \vd) + {\xi}^{(i, j)}\right\}_{i=1, \dots, N,\; j=1,\dots,a},\\
	D_M^{\text{rv}} &= \left\{ \left ( \vq^{(i)},\vy^{(i, j)} \right )  \mid \vy^{(i, j)}
	= h(\vq^{(i)}, \vd) + {\xi}^{(i, j)}\right\}_{i=1, \dots, M,\; j=1,\dots,a},
\end{align}
where $\vq^{(i)}$ and  $\xi^{(i, j)}$ are the \emph{i.i.d.} samples of RVs $\rvQ$ and $\rvXi$, respectively, and $a \in \sN_{+}$ represents the augmentation multiplier.
By substituting the augmented datasets $D_N^{\text{rv}}$ and $D_M^{\text{rv}}$ for $D_N$ and $D_M$, respectively, in Eq.~(\ref{eq:crude}),
we obtain the reduced variance estimators of the MSEs $\widehat{\mathcal{M}} (f \mid D_N)$ and $\widehat{\mathcal{M}} (f \mid D_M)$  as follows:
\begin{equation}\label{eq:mc_vd}
	\begin{aligned}
		\widehat{\mathcal{M}}^{\text{rv}} (f \mid D_{N}) &=
		\dfrac{1}{\vert D_N \vert \times a } \sum_{(\vq, \vy) \in D^{\text{rv}}_N}
		\left \lVert \vq - f(\vy) \right \rVert_{2}^2\\
		\widehat{\mathcal{M}}^{\text{rv}} (f \mid D_{M}) &=
		\dfrac{1}{\vert D_M \vert \times a } \sum_{(\vq, \vy) \in D^{\text{rv}}_M}
		\left \lVert \vq - f(\vy) \right \rVert_{2}^2.
	\end{aligned}
\end{equation}
Appendix~\ref{appendix:data_augmenting} explains
the reduction in the statistical error obtained with the use of the
estimator $\widehat{\mathcal{M}}^{\text{rv}}$ relative to the crude one $\widehat{\mathcal{M}}$ .

\section{Numerical experiment: Linear-Gaussian setting}
\label{sec:linear_setting}
In this section, we present the results of numerical experiments to demonstrate the error estimation outlined in Section~\ref{sec:error_estimation}.
We focus on a linear-Gaussian scenario, where both distributions of the prior and the measurement error are Gaussian and the observational map is linear.
This setting is advantageous for error analysis because it allows for an analytical solution to the tECV.
We will expand our numerical experiments to a broader nonlinear setting in Section~\ref{sec:eit}.

To assess the effectiveness of the PACE-based approach, we conduct numerical experiments in two different scenarios. In Section~\ref{sec:1d}, we apply our method to a one-dimensional setting, \emph{i.e.}, $\vq \in \sR$, and in Section~\ref{sec:nd}, we evaluate our approach in a high-dimensional scenario.
Furthermore, we compare the performance of the PACE-based approach with that of the IS-based approach, by examining the effect of two conditions: (1) increasing the dimension of the inferred parameter vector  $\vq$,  and (2) reducing the measurement error variance.
These factors significantly exacerbate the intractable property of the posterior distribution.

\subsection{One-dimensional case}\label{sec:1d}
In this section, we consider the following setting:
\begin{subequations}
	\begin{align}
		&h(\rvQ, \vd)= \dfrac{\rvQ}{(\vd-0.5)^2+1},\quad \vd\in [0,1], \\
		&\rvQ\sim \mathcal{N}(0,\sigma_q^2),\; \text{with} \quad \sigma_q= 2,\\
		&\rvXi \sim \mathcal{N}(0, \sigma_{\xi}^2),
	\end{align}
\end{subequations}
where $\mathcal{N} (0, \sigma^2)$ is a Gaussian distribution with a mean of zero and variance $\sigma^2$. 
We analyze two cases of variances in measurement error:
$\sigma^2_{\xi} = 0.01^2$ and  $\sigma^2_{\xi} = 0.001^2$.
Given that the observational map is linear in terms of $\vq$ and both the prior and measurement error distributions are Gaussian, the CE map $\phi_{\vd}$ defined in Eq.~(\ref{eq:cm_orthogonal_projection}) is linear and possesses a closed-form expression. Thus, we obtain
\begin{equation}
	\phi_{\vd}(\vy)= \frac{a\sigma_q^2}{a^2\sigma_q^2+\sigma_\xi^2}\, \vy.
	\; \text{where} \;a = \dfrac{1}{(d-0.5)^2+1}.
\end{equation}
Appendix~\ref{appendix:linear_approximation} provides a detailed description of this linear approximation.

To implement our method in this setting, we employ linear regression to empirically approxi-mate the CE by solving Eq.~(\ref{eq:cm_orthogonal_projection}) within the subspace of linear functions $S'$, given the sample dataset $D_N = \{(\vq^{(i)}, \vy^{(i)})\}_{i=1}^N$.
For detailed information, please refer to Appendix~\ref{appendix:linear_approximation_empirical}.
We then utilize the obtained CE approximation to estimate the tECV using Eq~(\ref{eq:mc_vd}).
To assess the accuracy of our estimation, we compute the empirical relative MAE metric defined in Eq.(\ref{eq:numMAE}).
In particular, we aim to compare the relative MAE with the estimation in Eq.(\ref{eq:relMAE_theorectical}),
considering the absence of bias error $\epsilon_{S'}$ in the linear-Gaussian setting.
Additionally, we implement the IS-based approach described in Appendix~\ref{appendix:ll-approach} to estimate the tECV and to evaluate its relative MAE.

Based on the result of Proposition~\ref{proposition:error_estimation}, setting $N\equiv M$ is a straightforward decision for the PACE-based approach. 
In the IS-based approach, the ECV is estimated using a double-loop MC algorithm, which is detailed in Appendix~\ref{appendix:ll-approach}.
Because the posterior variance remains invariant \emph{w.r.t} $\vy$ in the linear-Gaussian setting, we set the number of samples used by the outer loop to $N_{\text{o}}=1$. 
In the PACE-based approach, the total number of samples is $N+M$, while for the IS-based approach, it is $N_{\text{i}}+1$, where $N_{\text{i}}$ represents the number of processed by the inner loop.
Fig.~\ref{fig:ce_1d} shows the relative MAE for $d = 0.5$ for two different measurement variances, \emph{i.e.}, $\sigma_{\xi}^2= 0.01^2$ and $\sigma_{\xi}^2= 0.001^2$.
Fig.~\ref{fig:ce_1d_many_d} showcases the tECV for different values of $\vd$ and  for $\sigma_{\xi}^2= 0.01^2$.
To estimate the relative MAE, we perform 1000 statistically independent simulations.

\begin{figure}[!ht]
	\centering
	\begin{subfigure}[b]{0.45\textwidth}
		\includegraphics[scale=0.4]{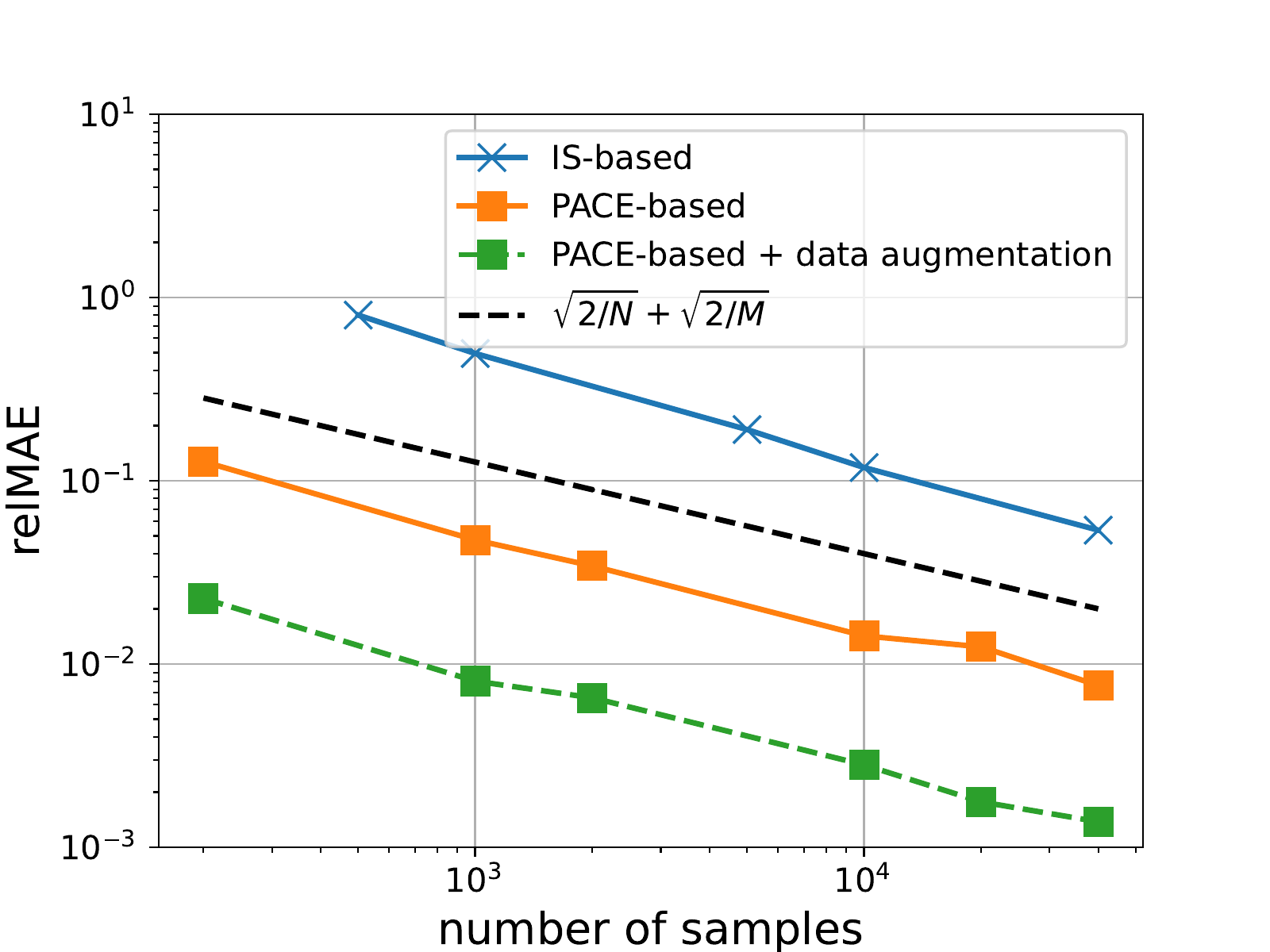}
		
		\caption{$~$}
	\end{subfigure}
	\begin{subfigure}[b]{0.45\textwidth}
		\includegraphics[scale=0.4]{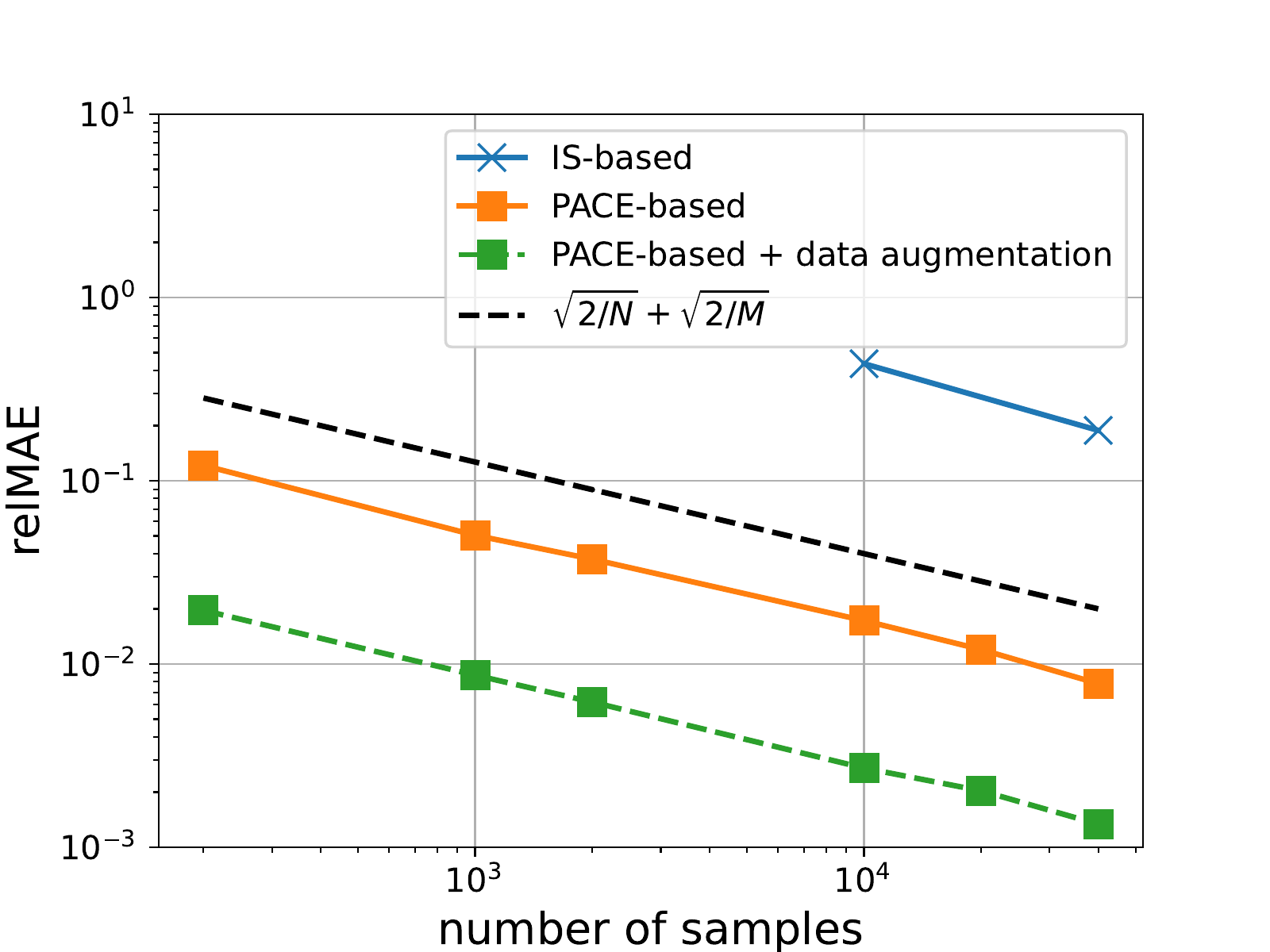}
		\caption{$~$}
	\end{subfigure}
	
	\caption{Comparison of the PACE- and IS-based approaches for estimating the tECV
		for $d = 0.5$:  (a) $\rvXi\sim \mathcal{N}(0,0.01^2)$ and
		(b) $\rvXi\sim \mathcal{N}(0,0.001^2)$.}
	
	\label{fig:ce_1d}
\end{figure}

As can be observed in Fig.~\ref{fig:ce_1d}, the PACE-based method requires considerably fewer samples than the IS-based approach for achieving a similar relative MAE value.
For example, for $\sigma_{\xi}^2= 0.01^2$ without applying data augmentation, the required number of samples is reduced by approximately 30 fold, and for $\sigma_{\xi}^2= 0.001^2$, it is reduced by about 200 fold.
When data augmentation is used, our approach further reduces the number of samples by nearly two additional orders of magnitude.

Overall, the evolution of the empirical relative MAE confirms the theoretical error estimation obtained in
Proposition~\ref{proposition:error_estimation}, \emph{i.e.},
$\mathcal{O}\bigl(\frac{2}{\sqrt{\pi N}} +\frac{2}{\sqrt{\pi M}} \bigr)$.
Particularly, we have $\;\variance {\left(\rvQ - f^*(\rvY_{\vd})\right)^2} =
2 \expectation {\left( \rvQ - f^*({\rvY_{\vd}}) \right)^2}^2$, which validates Assumption~\ref{assumption2}.
As the CE in this example is a linear function, $\epsilon_{opt}$ is much smaller than the estimated order $\Bigl (\expectation{
	\left \lvert \widehat{\mathcal{M}}(f^*\vert D_N) - \mathcal{M}(f^*) \right\lvert}\Bigr )$ that is assumed in the Assumption~\ref{assumption3}.
Consequently, the obtained relative MAE is consistently smaller than the theoretical error estimation of $\bigl (\frac{2}{\sqrt{\pi N}} +\frac{2}{\sqrt{\pi M}} \bigr)$.

We also observe that the relative MAE in the PACE-based approach is \emph{invariant} \emph{w.r.t.} the measurement error variance.
In contrast, the IS-based method requires a considerably larger number of samples for a smaller measurement error variance. Particularly, a reduction in the measurement error variance decreases the evidence terms of Bayes' formula. Consequently, the statistical error inherent in the IS-based approach becomes amplified.

\begin{figure}[!ht]
	\centering
	\begin{subfigure}[b]{0.49\textwidth}
		\includegraphics[scale=0.4]{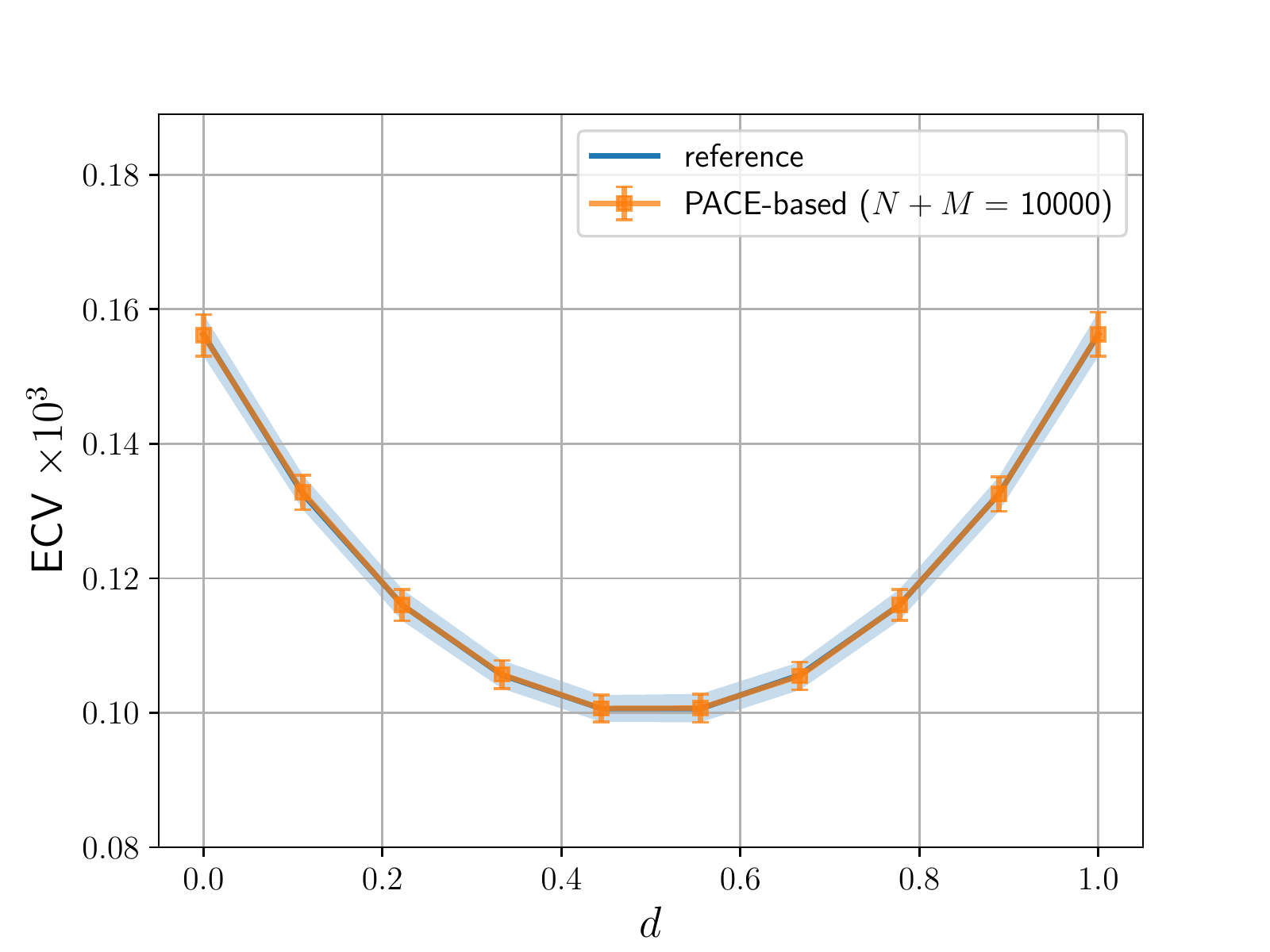}
		\caption{$~$}
	\end{subfigure}
	\begin{subfigure}[b]{0.49\textwidth}
		\includegraphics[scale=0.4]{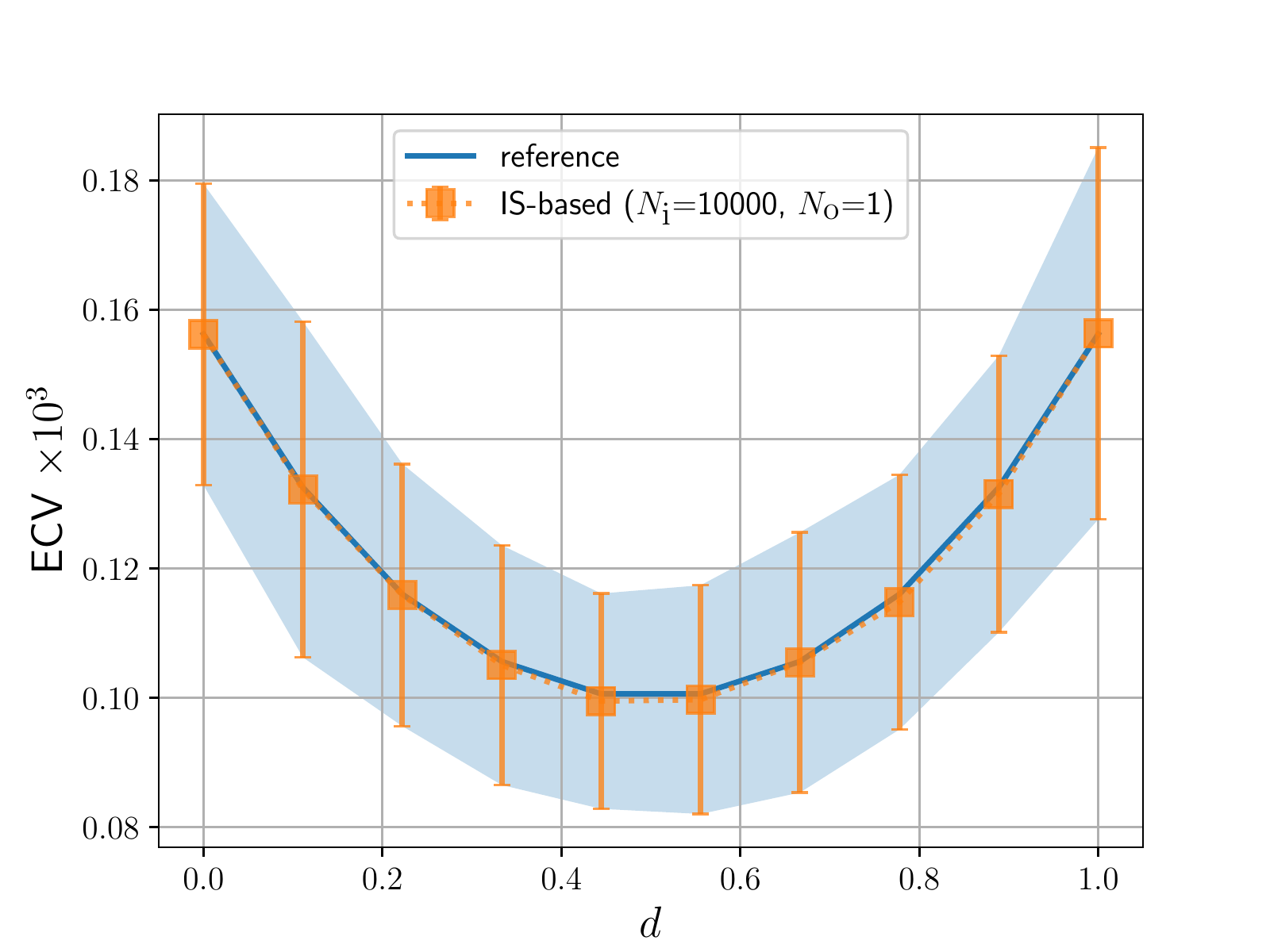}
		\caption{$~$}
	\end{subfigure}
	
	\caption{Comparison of the PACE- and IS-based approaches for
		estimating ECV with
		$\rvXi \sim \mathcal{N}(0, 0.01^2)$ using 10,000 samples:
		(a) PACE-based approach without data augmentation and (b) IS-based approach.
		The error bar plot depicts the mean and standard deviation of the empirical ECV values
		estimated from 1000 statistically independent MC simulations.}
	\label{fig:ce_1d_many_d}
\end{figure}

Fig.~\ref{fig:ce_1d_many_d} clearly illustrates
that the PACE-based approach  accurately
estimates the A-optimal DOE, \emph{i.e.}, $d_A= 0.5$, with 10,000 samples.
However, the IS-based approach results in significant errors in identifying the optimal design due to its statistical errors in estimating the tECV.

\subsection{High-dimensional case}\label{sec:nd}
In this section, we address the high-dimensional linear-Gaussian scenario.
The setting can be described as follows:
\begin{equation}
	\begin{aligned}
		h(\vq, \vd) &= \dfrac{1}{(d-0.5)^2+1} \; \rvQ,\quad d\in [0,1], \\
		\rvQ &\sim \mathcal{N}(\vek{0}_n,\mathbf{I}_n), \\
		\rvXi &\sim \mathcal{N}(\vek{0}_n, 0.1^2\mathbf{I}_n),
	\end{aligned}
\end{equation}
where $\vek{0}_n \in \sR^\dimq$ and
$\mathbf{I}_n \in \sR^{n\times n}$ represent a zero vector and an identity matrix, respectively.
We perform an analysis similar to the one presented in Section~\ref{sec:1d}.

\begin{figure}[!ht]
	\centering
	\begin{subfigure}[b]{0.49\textwidth}
		\includegraphics[scale=0.4]{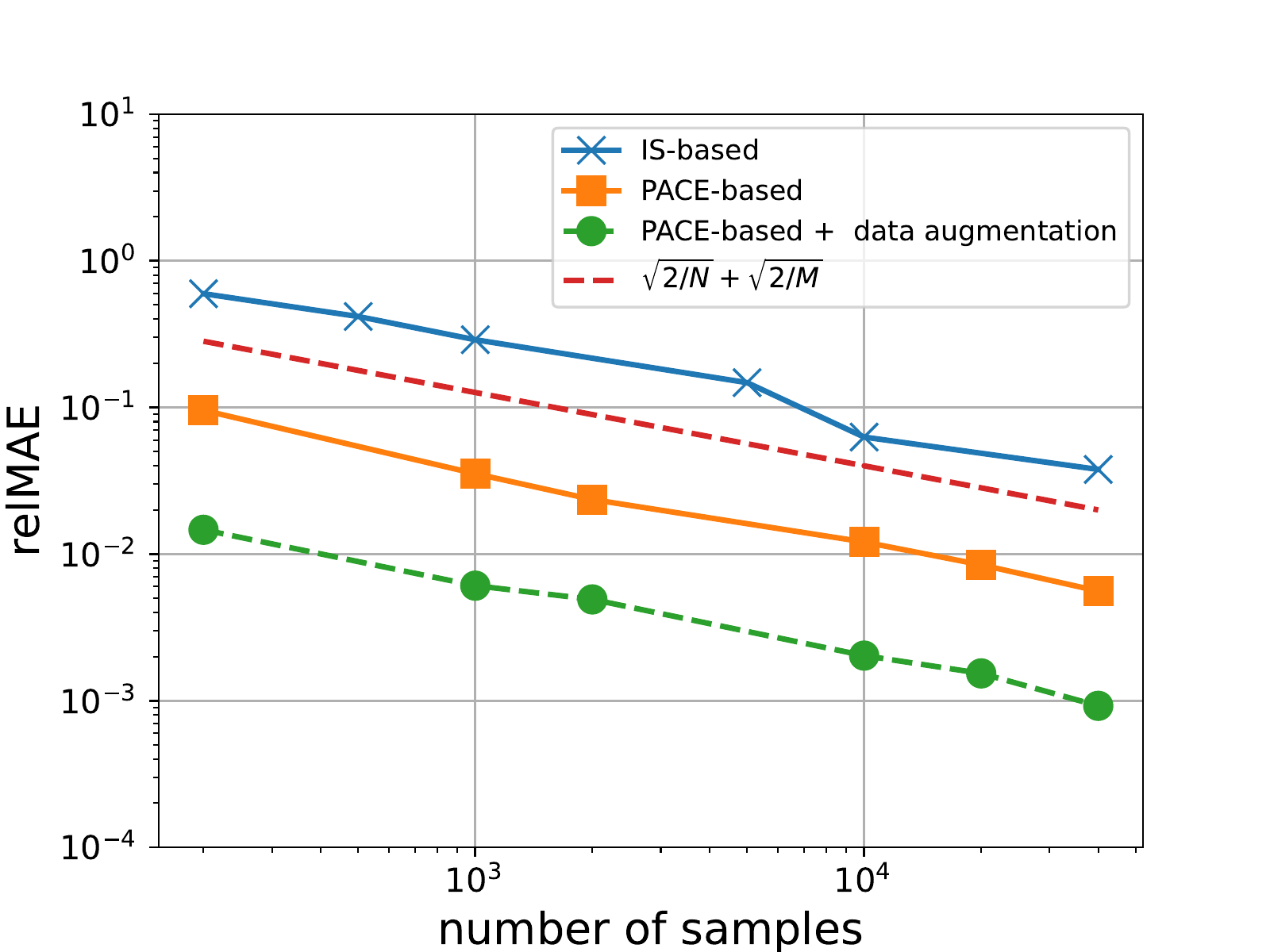}
		\caption{}
	\end{subfigure}
	\begin{subfigure}[b]{0.49\textwidth}
		\includegraphics[scale=0.4]{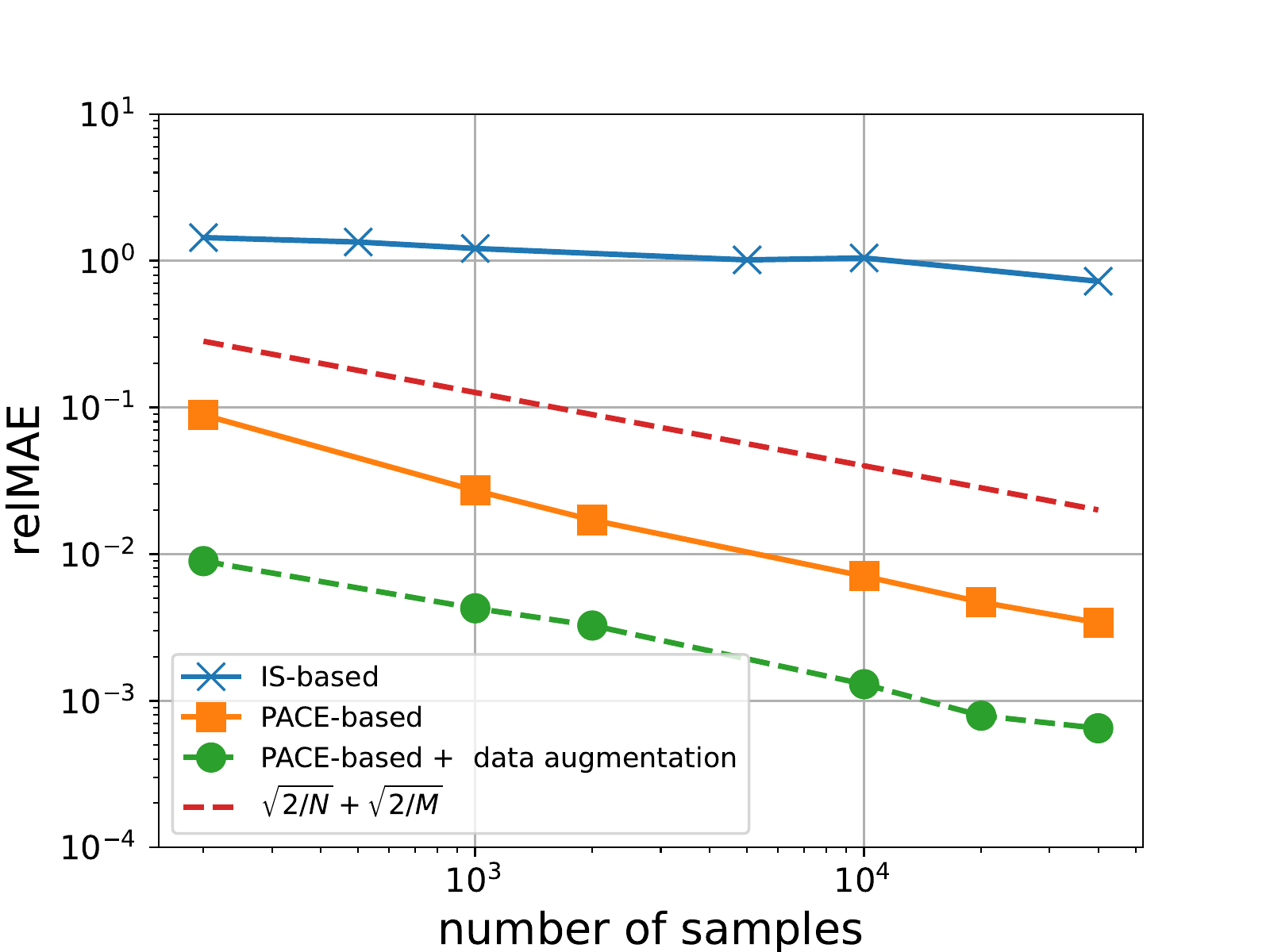}
		\caption{$~$}
	\end{subfigure}
	
	\centering
	\begin{subfigure}[b]{0.49\textwidth}
		\includegraphics[scale=0.4]{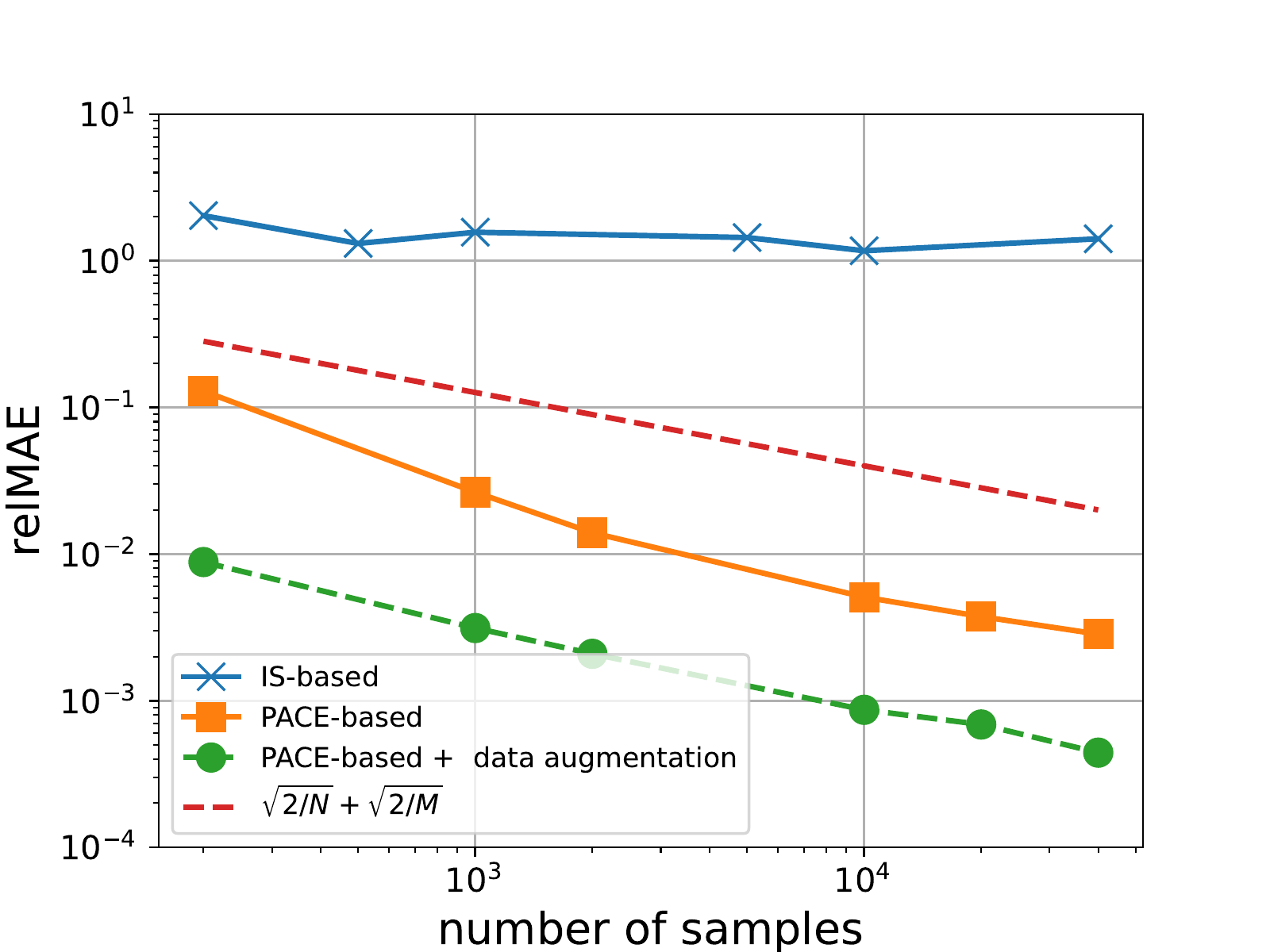}
		\caption{$~$}
	\end{subfigure}
	\begin{subfigure}[b]{0.49\textwidth}
		\includegraphics[scale=0.4]{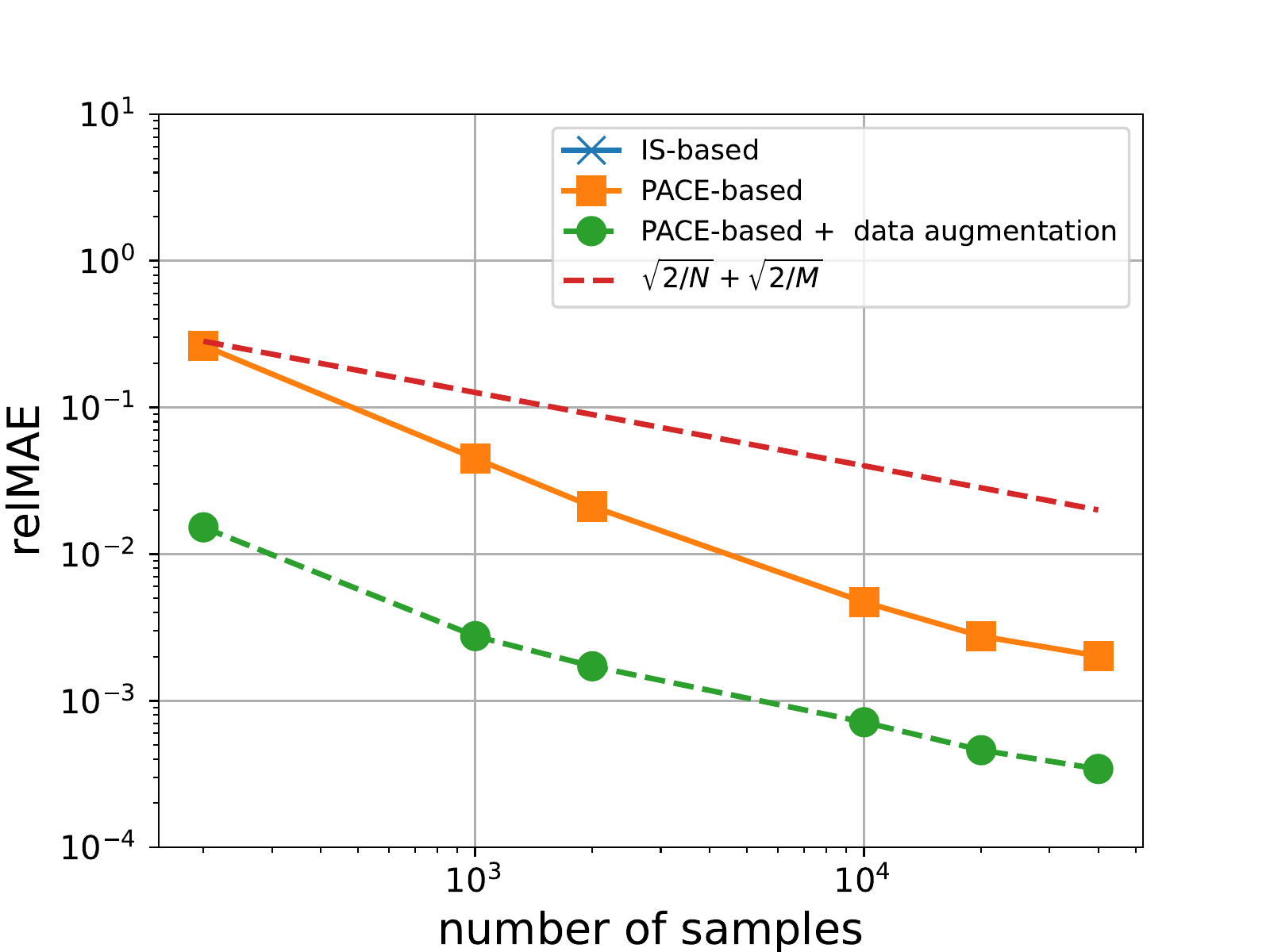}
		\caption{$~$}
	\end{subfigure}
	
	\caption{Comparison of the PACE- and IS-based approaches with high-dimensional linear examples:
		(a) $n=2$, (b) $n=5$, (c) $n=10$, and (d) $n=20$. For $n=20$, all 100 statistically independent MC simulations that uses the IS-based approach return a \emph{not-a-number} error. Consequently, the numerical result of the IS-based approach for $n=20$ is not available.}
	\label{fig:ce_nd}
\end{figure}

Fig.~\ref{fig:ce_nd} illustrates the relative MAE for different values of $n \in \{2, 5, 10, 20\}$. 
The numerical results demonstrate that the IS-based approach requires an extremely large number of samples for high-dimensional cases.
Specifically, for  $n \geq 5$, we do not observe the convergence of the IS-based approach.
For $n = 20$, the IS-based approach frequently returns a \emph{not-a-number} error due to the decreasing evidence term in the Bayes' theorem  (see Eq.~(\ref{eq:evidence})).
In contrast, our PACE-based approach effectively mitigates the curse of dimensionality.

\section{Stochastic optimization for the continuous design domain}\label{sec:continuous_domain}
The PACE approach can be used directly to solve the A-ODE problem in a discrete and finite design domain.
By estimating the tECV for each experimental design candidate using PACE, we identify the optimal solution as the design with the minimum tECV. 
In this section, we shift our focus to the continuous design domain.
We assume that the gradient of the measurement map \emph{w.r.t.} the design parameters, $\nabla_{\vd} h$, exists and can be computed numerically.
To solve the A-ODE problem efficiently, we apply stochastic gradient descent techniques on top of the PACE-based MC estimator of tECV.

When stochastic gradient optimization algorithms are used to solve the A-optimal DOE prob-lem, the computation of the gradient of the tECV \emph{w.r.t.} the design parameters, denoted as ${\nabla_{\vd} V_{\vd}}$, can pose a challenge. To address this issue, we propose a nonlocal approximation of the CE.
Our approach involves approximating the CE within a neighborhood surrounding a given design parameter vector to enable the evaluation of ${\nabla_{\vd} V_{\vd}}$.
We utilize ANNs as the approximation tool, although other regression techniques can also be incorporated into our method.

Let $f_{\text{N}}(\cdot; \weight): \sR^{\dimy+\dimd} \rightarrow \sR^\dimq$ be an ANN map,
where $\weight$ denotes the hyperparameters of the map.
The input of the proposed ANN is the concatenation of an $\dimy$-dimensional vector $\vy$ and a $\dimd$-dimensional vector $\vd$. By combining Eqs.~(\ref{eq:AoptimalED}), (\ref{eq:cm_orthogonal_projection}),  and (\ref{eq:theorem1}) the A-optimal design parameter vector, $\vd_{\text{A}}$,
can be obtained by solving the following optimization problem
\begin{equation}\label{eq:d-optimal-a} 
	\vd_{\text{A}} = \arg_{\vd} \; \min_{\vd, \weight} \;  \expectation{\left\lVert \rvQ -f_{\text{N}}(\rvY_{\vd}, \vd; \weight)  \right\rVert_2^2}
\end{equation}
which minimizes the MSE \emph{w.r.t.} the ANN's hyperparameters and design parameters.
Notably, the optimization problem that approximates the CE and the problem that seeks the A-optimal DOE minimize
the same objective function.

During the course of the iterative optimization process to solve Eq.~(\ref{eq:d-optimal-a}), the ANN may need to be retrained when the algorithm provides a new candidate vector for $\vd_{\text{A}}$.
Here, we employ the transfer learning technique, \emph{i.e.}, the weights trained for the previous design candidate are used as the starting point for the current training process. 
This simple transfer learning technique efficiently reduces the training time and the amount of data required.
The reminder of this section provides a more detailed explanation of the nonlocal approximation of the CE in Section~\ref{sec:condtional_mean_approximation} and discusses the algorithms that are used for implementing our method in Section~\ref{sec:algorithm}.

\subsection{Nonlocal approximation of the conditional expectation}\label{sec:condtional_mean_approximation}

Given a design candidate $\vd_{\kappa}$, we use a PDF  $w_{\kappa}$ defined over the design domain $\mathcal {D}$ such that $w_{\kappa} (\vd) : = \frac{w(\vd ,  \vd_{\kappa})}{c}$. Here
$w(\vd , \vd_{\kappa})$ is a weight function such as a kernel, and $c$ is a normalization constant defined as $c:= \int_{\mathcal{D}}{w(\vd , \vd_{\kappa})} \mathrm{d} \vd \, < \infty$.
In our notation, $\kappa \in \sN_{+}$ denotes the iteration counter of the main procedure, which  will be discussed in Section~\ref{sec:algorithm}.

To approximate the CE in the neighborhood of a design $\vd_{\kappa}$,
we determine the hyperparameters $\weight_\kappa$ of the ANN $f_{\text{N}}$  by solving the following optimization problem:
\begin{equation}\label{eq:theta_op}
	\weight_\kappa = \arg \quad \min_{\weight} \quad {L}_{\kappa}(\weight),
\end{equation}
where $L_{\kappa}$ is the weighted MSE function defined as
\begin{equation}\label{eq:weighted_mse}
	L_{\kappa}(\weight) = \int_{\mathcal{D}} \expectation{\left \lVert\rvQ -f_{\text{N}}(\rvY_{\vd}, \vd; \weight)
		\right \rVert_2^2 } w_{\kappa} (\vd) \dint \vd.
\end{equation}
We use the variance reduction technique, similar to Eq.~(\ref{eq:mc_vd}), to approximate the function $L_{\kappa}$ as
\begin{equation}\label{eq:hat_weighted_mse}
	\widehat{L}^{\text{vr}}_{\kappa}(\weight) = \dfrac{1}{N\times a}
	\sum_{i=1}^{N} \sum_{j=1}^{a}
	\left \rVert \vq^{(i)} - f_{\text{N}}(\vy^{(i,j)},d^{(i)}; \weight) \right\rVert_2^2,
\end{equation}
where $\{\vq^{(i)}\}_{i=1}^N$ are the \emph{i.i.d.} samples of RV $\rvQ$, $\{\vd^{(i)}\}_{i=1}^N$ are the \emph{i.i.d} samples generated according to the PDF $w_{\kappa}$, and $\vy^{(i,j)} = h(\vq^{(i)}, \vd^{(i)}) + \xi^{(i,j)}$, where  $\xi^{(i,j)}$ is the
 \emph{i.i.d.} sample of RV $\rvXi$.

A simple choice for the kernel function is the density function of a multivariate Gaussian distribution,  where the marginal variances are used as tuning parameters.
Intuitively, increasing the marginal variances of PDF $w_{\kappa}$ reduces the error of the CE approximation over the entire design domain; however, this  also requires a considerably large number of samples to accurately estimate the weighted MSE $\widehat{L}^{\text{vr}}_{\kappa}$.
In our algorithm, which is discussed in more detail in Section~\ref{sec:algorithm}, for the $\kappa$-\emph{th} iteration, we only need to evaluate the tECV and its derivative in the neighborhood of $\vd_{\kappa}$.
Therefore, a required standard deviation of PDF $\eta_{\kappa}$ is much smaller than the characteristic length of the design domain,  which improves the computational efficiency by reducing the number of samples required to estimate the weighted MSE $\widehat{L}^{\text{vr}}_{\kappa}$.

\subsection{Algorithm}\label{sec:algorithm}
In this subsection, we present a stochastic gradient descent algorithm to solve Eq.~(\ref{eq:d-optimal-a}). The algorithm utilizes the nonlocal approximation of the CE to estimate the tECV $V_{\vd}$ and its derivative  $\nabla_{\vd} V_{\vd}$.
A typical $\kappa$-\emph{th} iteration of the proposed algorithm consists of two steps:
\begin{itemize}
	\item [i)] Given a design parameter candidate $\vd_{\kappa-1}$,
	we apply \emph{transfer learning} and solve Eq.~(\ref{eq:theta_op}) to update the hyper-parameters $\weight_{\kappa}$,
	\item [ii)] We update the design parameter candidate using the gradient of the MSE
	\emph{w.r.t} $\vd$, \emph{i.e.}, $\nabla_{\vd} \,  \expectation{\lVert\rvQ -f_{\text{N}}(\rvY_{\vd}, \vd; \weight_{\kappa})\rVert_2^2 }$.
	This step returns an updated candidate $\vd_{\kappa}$.
\end{itemize}
Algorithm~\ref{algorithm:main} summarizes the main procedure, which calls
the suboptimization procedures that corres\-pond to steps i) and ii) described in Algorithms~\ref{algorithm:local_approximation_CE} and
\ref{algorithm:sgd_design}, respectively.
The suboptimization procedures in each step are executed using the Adam algorithm~\cite{kingma2014adam}.

\begin{algorithm}[H]
	\caption{Main}
	\label{algorithm:main}
	\begin{algorithmic}[1]
		\Require Observational model, its derivatives ($\nabla_{\vd} h$), and number of iteration $K$
		\State Initiate a value for $\vd_{0}$
		\For{$\kappa$ from 1 to $K$}
		\State approximating the conditional expectation using Algorithm~\ref{algorithm:local_approximation_CE} $\rightarrow$ $\weight_{\kappa}$
		\State solving $\arg \; \min_{\vd}  \expectation{\lVert\rvQ -f_{\text{N}}(\rvY_{\vd}, \vd; \weight_{\kappa})\rVert_2^2} $ using Algorithm~\ref{algorithm:sgd_design} $\rightarrow$ $\vd_\kappa$
		\EndFor\\
		\Return $\vd_{\text{A}} \leftarrow \vd_{K}$
	\end{algorithmic}
\end{algorithm}

In Algorithm~\ref{algorithm:local_approximation_CE}, we generate \emph{i.i.d.} samples $\{\vd^{(i)}\}_{i=1}^N$, $\{\vq^{(i)}\}_{i=1}^N$, and $\{\xi^{(i)}\}_{i=1}^N$ following PDFs $w_{\kappa}$, $\pi_{\rvQ}$, and $\pi_{\rvXi}$, respectively.
Next, we use those samples to compute the samples of the observational RV, $\vek{y}^{(i, j)} = h(\vq^{(i)},\vd^{(i)}) + \xi^{(i, j)}$ for $j =1,\dots, a$.
Finally, we employ the Adam algorithm to train the ANN. We use the dataset $\left \{ \left(\vq^{(i)}, \vd^{(i)}, \vek{y}^{(i,j)} \right ) \right \}_{i =1, \dots, N; \; j =1, \dots a}$ and the loss function $\widehat{L}^{\text{vr}}_{\kappa}$ (defined in Eq.~(\ref{eq:hat_weighted_mse})) for this training.

To reduce the numbers of epochs ($e_1$) and data samples ($N\times a$), we employ the \emph{transfer learning} technique \cite{torrey2010, weiss2016}, which reuses the hyperparameters obtained from the previous iteration, $\weight_{\kappa-1}$, as the initial values for the current iteration $\kappa$.

\begin{algorithm}[H]
	\caption{Nonlocal approximation of the CE operator}
	\label{algorithm:local_approximation_CE}
	\begin{algorithmic}[1]
		\Require $\vd_\kappa$, weight function $w(\cdot ; \vd_\kappa)$, number of epoch $e_1$, and learning rate $\alpha_1$
		\State Generate $\{\vd^{(i)}\}_\itn \sim w_{\kappa} (\vd)$, $\{\vq^{(i)}\}_\itn \sim \pi_{\rvQ}$, $\{\xi^{(i,j)}\}_{i =1, \dots, N; \; j =1, \dots a} \sim \pi_{\rvXi}$
		\State Evaluate $\{h(\vq^{(i)}, \vd^{(i)})\}_{i=1}^N$ 
		\State Compute samples $\{\vy^{(i,j)}\}_{i =1, \dots, N; \; j =1, \dots a}$ where
		$\vek{y}^{(i,j)} = h(\vq^{(i)},\vd^{(i)}) + \xi^{(i,j)}$
		\State Collect data into a dataset  
		$D^{\text{vr}}_N= \left \{ \left (\vq^{(i)}, \vd^{(i)}, \vek{y}^{(i,j)} \right) \right\}_{i =1, \dots, N; \; j =1, \dots a}$
		\State Train the ANN using the initial weights $\weight_{\kappa - 1}$ with $e_1$ epoch, Adam algorithm, learning rate $\alpha_1$, loss function $\widehat{L}^{\text{vr}}_{\kappa}$
		defined in Eq.~(\ref{eq:hat_weighted_mse}) and dataset $D^{\text{vr}}_N$ \\
		\Return Hyper-parameters $\weight_{\kappa}$ of ANN $f_{\text{N}}$
	\end{algorithmic}
\end{algorithm}

In Algorithm \ref{algorithm:sgd_design}, we generate \emph{i.i.d.} samples $\{\vq^{(i)}\}_{i=1}^M$ following PDF $\pi_{\rvQ}$.
The dataset $\{\vq^{(i)}\}_{i=1}^M$ is divided into different batches of size $M_b$, denoted as
$\{B_1,\dots, B_{\beta}\}$.
For each batch $B$ and design candidate $\vd_{0}$, we compute the corresponding samples
of the observational RV as $\vek{y}^{(i,j)} = h(\vq^{(i)},\vd_{0}) + \xi^{(i,j)}$,
where $\vq^{(i)}\in B$, $j =1,\dots, a$, and $\xi^{(i,j)}$ are the \emph{i.i.d.} samples of the RV $\rvXi$.
We then update the design parameter using
\begin{equation}\label{eq:update_design_candidate}
	\vd' = \vd_{0} - \dfrac{\alpha_2}{M_b\times a}
	\sum_{i=1}^{M_b} \sum_{j=1}^{a}\nabla_{\vd}
	\Bigl[\lVert q - f_{\text{N}}(h(\vq,d) +\xi,\vd) \rVert_2^2\Bigr](q^{(i)}, \vy^{(i,j)},\vd_{0}),
\end{equation}
where $\alpha_2$ is a learning rate. Here, $\nabla_{\vd}  \Bigl[\lVert q - f_{\text{N}}(h(\vq,d) +\xi,\vd) \rVert_2^2\Bigr](q^{(i)}, \vy^{(i,j)},\vd_{0})$
represents the gradient of $\lVert q - f_{\text{N}}(h(\vq,d) +\xi,\vd) \rVert_2^2$ \emph{w.r.t.} $\vd$ and is 
evaluated at $(q^{(i)}, \vy^{(i,j)},\vd_{0})$. The value of $\vd'$ is then assigned to $\vd_{0}$ for the next batch.
\begin{algorithm}[H]
	\caption{Minimizing the MSE $ \expectation{\lVert\rvQ -f_{\text{N}}(\rvY_{\vd}, \vd; \weight_{\kappa}) \rVert_2^2}$
		\emph{w.r.t.} $\vd$}
	\label{algorithm:sgd_design}
	\begin{algorithmic}[1]
		\Require Observational model ($h$) and its derivatives ($\nabla_{\vd} h$),
		map $f_{\text{N}}(\cdot; \weight_{\kappa})$ from Algorithm~\ref{algorithm:local_approximation_CE},
		$\vd_{\kappa}$, number of samples $M$, number of epoch $e_2$, batch-size $M_b$, and learning rate $\alpha_2$
		\State $\vd_{0} \leftarrow \vd_{\kappa}$
		\State Generate samples $\{\vq^{(i)}\}_{i=1}^M$ following prior PDF $\pi_{\rvQ}$
		\For{$e=1$ to $e_2$}
		\For{$b=1$ to $M/M_b$}
		\State Generate samples $\{\xi^{(i,j)}\}_{i=1,\dots, M_b;\; j =1,\dots, a}$ of the measurement error $\rvXi$
		\State Evaluate $\vy^{(i,j)} = h(\vq^{(i)}, d_{0}) + \xi^{(i,j)}$
		\State Updating the design candidate using Adam algorithm based on  Eq.~(\ref{eq:update_design_candidate})
		\State $\vd_{0} \leftarrow \vd'$
		\EndFor
		\EndFor\\
		\Return $\vd_{\kappa+1} \leftarrow \vd_{0}$
	\end{algorithmic}
\end{algorithm}

In Appendix~\ref{appendix:gradients}, we use the chain rule to derive the formulation of the gradient 
$\nabla_{\vd} \Bigl[\lVert \vq - f_{\text{N}}(h(\vq)+\xi,\vd) \rVert_2^2\Bigr](q^{(i)}, \vy^{(i,j)} ,\vd_{0})$ in terms of
the gradients of the observational map $h$ and the ANN $f_{\text{N}}$ \emph{w.r.t} $\vd$.
Notably, the gradients of the ANN function $f_{\text{N}}$ are obtained using the backpropagation \cite{GrieWal08}.

\section{Numerical experiment: Electrical impedance tomography}
\label{sec:eit}
Electrical impedance tomography (EIT) aims to determine the conductivity of a closed body by measuring the potential of electrodes placed on its surface in response to applied electric currents.
In this section, we revisit an experimental design problem previously studied in \cite{Beck2018, Beck2020, CARLON2020}. The objective of the experiment is to recover the fiber orientation in each orthotropic ply of a composite laminate material. This is achieved by measuring the potential when applying  low-frequency electric currents through electrodes attached to the body surface. 
The formulation of the EIT experiment is described in Section~\ref{sec:FE}, where we adopt the entire electrode model from~\cite{somersalo1992existence}.

Previous studies employed the maximization of EIG as the optimality criterion and estimated the EIG using the IS technique combined with the Laplace approximation of the posteriors.
In Section~\ref{sec:eit_optimization}, we analyze our PACE-based approach for estimating the tECV and compare the obtained results with those using the IS-based technique.
Subsequently, we utilize the optimization algorithms discussed  in Section~\ref{sec:continuous_domain} to solve the DOE problem and present the numerical results.

\subsection{Formulation and finite element model of EIT experiment}\label{sec:FE}
We consider a rectangular body ${B}$ which has boundary $\partial B$ and
consists of two plies ${B}=\bigcup_{i=1}^2 {B}_i$.
The exterior boundary surfaces  are equipped with $N_{\text{el}}=10$ electrodes on the area $E_1, \dots, E_{N_{\text{el}}}$.

These electrodes are used to apply known currents,
and they also allow us to measure the electric potential.
The configuration used in this study is illustrated in Fig.~\ref{fig:eit}.
\begin{figure}[!ht]
	\centering
	\includegraphics[scale=0.2]{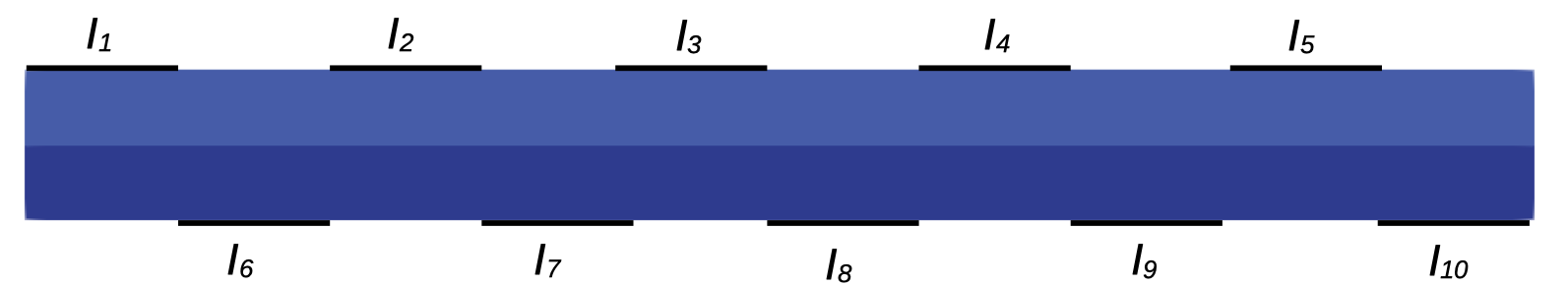}
	\caption{Illustration of the EIT experiment for a two-ply composite sample. The black rectangles indicate the electrode positions, and $I_1, \dots, I_{10}$ are their applied currents.}
	\label{fig:eit}
\end{figure}

The fiber orientation over body ${B}$ is modeled as a random field
$\eta: {B} \times \varOmega \rightarrow [-\pi, \pi]$ as follows:
\begin{equation}
	\eta(x, \omega) =
	\begin{cases}
		& \eta_1(\omega)\quad \text{in} \; {B}_1, \\
		& \eta_2(\omega) \quad \text{in}\; {B}_2.
	\end{cases}
\end{equation}
The RV $\rvQ$ in this example is a random vector valued in  $[-\pi, \pi]^2$ that represents the prior distribution of the fiber orientations of the plies, defined as
\begin{equation}\label{eq:eit_rvq}
	\rvQ (\omega) := [\eta_1(\omega), \eta_2(\omega)]^\top.
\end{equation}

We represent the quasistatic current flux and the quasi-static potential as random fields as
$\zeta: {B} \times \varOmega \rightarrow \sR^3$ and $u: {B} \times \varOmega \rightarrow \sR$,
respectively. The electrical current field
and the  potential field satisfy the following PDE:
\begin{equation}\label{eq:eit_pde}
	\begin{aligned}
		\nabla \cdot \zeta &= 0,\\
		\zeta &= \sigma (\eta)\; \nabla u,
	\end{aligned}\;
\end{equation}
where $\sigma$ is the conductivity that depends on the fiber orientation of the plies.
The relation between the conductivity and the fiber orientations $\eta$ is given as
\begin{equation}
	\sigma (\eta) = \mathcal{R}(\eta)^\top \cdot \bar{\sigma} \cdot  \mathcal{R}(\eta),
\end{equation}
where $\mathcal{R}(\eta)$ is the rotation matrix,
\begin{equation}
	\mathcal{R}(\eta) =
	\begin{bmatrix}
		\cos(\eta) & 0 & \sin(\eta) \\
		0 & 1 & 0 \\
		-\sin(\eta) & 0 & \cos(\eta)
	\end{bmatrix}.
\end{equation}
Here, $\bar{\sigma}$ is a $3\times 3$ constant matrix,
which we set to $\bar{\sigma} = \operatorname{diag}(10^{-2}, 10^{-3}, 10^{-3})$.
The boundary conditions are given by
\begin{equation}
	\begin{aligned}\label{eq:pde.bc.1}
		\zeta\cdot n=0 &, \quad \text{on}\, \partial {B} \setminus (\cup E_l), \\
		\int_{E_l}\zeta\cdot n\dint{}x=I_l &, \quad l =  1,\dots, N_{\text{el}}, \\
		\frac{1}{E_l}\int_{E_l}u\dint{}x+z_l\int_{E_l}\zeta \cdot{n}\dint{}x=
		U_l &, \quad   l =  1,\dots, N_{\text{el}},
	\end{aligned}
\end{equation}
where $n$ is the unit outward normal, and $I_l$ and $U_l$ are the applied current and observable electric potential at electrode $E_l$, respectively.
The first boundary condition states the no-flux condition, and the second one implies that the total injected current through each electrode is known.
The third boundary condition captures the surface impedance effect, meaning that the shared interface between the electrode and material has an infinitesimally thin layer with a surface impedance of $z_l$.
In this study, we set to $z_l=0.1$ for the surface impedance.
We use the following two constraints (Kirchhoff law of charge conservation and ground potential condition) to guarantee the well-posedness:
\begin{equation}\label{eq:pde.bc.2}
	\sum_{l=1}^{N_{\text{el}}}I_l=0 \quad \text{and} \quad \sum_{l=1}^{N_{\text{el}}}U_l=0.
\end{equation}

To solve the PDE stated in Eq.~(\ref{eq:eit_pde}), we develop a 2D FE model using a quadratic mesh of size $50\times 6$ elements.
Details on the weak formulation of the FE model  are described in  Appendix~\ref{appendix:fem}.

\subsection{Numerical results}\label{sec:eit_optimization}

\paragraph{Setting}
Ten electrodes are placed on the surface of the rectangular domain ${B}=[0, 20]\times [0, 2]$, five on the top and five on the bottom of the surface.
The fiber orientations $\eta_1$ in ply ${B}_1$ and $\eta_2$ in ply ${B}_2$ are the quantities of interest with the following assumed prior distributions:
\begin{equation}
	\eta_1\sim \mathcal{U}\left(\frac{\pi}{4.5}, \frac{\pi}{3.5}\right), \quad \eta_2 \sim \mathcal{U}\left(-\frac{\pi}{3.5}, -\frac{\pi}{4.5}\right).
\end{equation}
The observational model is given as
\begin{equation}
	\begin{aligned}
		Y_{\vd} & = h(\rvQ, \vd) + \rvXi,\\
		\text{where} \quad  h(\rvQ, \vd) & := [U_1(\rvQ, \vd), \dots,
		U_{10}(\rvQ, \vd)]^\top,
	\end{aligned}
\end{equation}
and $\rvQ$ is defined in Eq.~(\ref{eq:eit_rvq}).
We consider two cases of the observational error distribution, which are
$\rvXi\sim \mathcal{N}(0, 10^2 \mathbf{I}_{10})$ and $\rvXi\sim\mathcal{N}(0, 3^2 \mathbf{I}_{10})$,
\emph{i.e.}, the error standard deviations are about 5\% and 1.5\%
of the ground-truth value, respectively.
We treat the applied electrode currents as the design parameters
denoted as $\vd := [I_1, \dots, I_{9}]^\top$, where
we apply the condition $I_{10} = -\sum_{l=1}^{9} I_l$
in accordance with the Kirchhoff law of charge conservation.
We seek the A-optimal DOE parameterized by vector $\vd \in [-1, 1]^{9}$.

To accelerate our analysis, we construct a surrogate model using an ANN to approximate the observational model $h$. 
The input of this surrogate model consists of the fiber orientations $ [\eta_1, \eta_2]^\top$ and the applied electrode currents $
[I_1, \dots, I_{9}]^\top$, and its output is the vector of the potential at ten electrodes $ [U_1, \dots, U_{10}]^\top $. 
 The surrogate model is trained on a dataset obtained by running the FE model for a large number of input samples such that 
 $[\eta_1, \eta_2]^\top \sim \mathcal{U}\left([\frac{\pi}{4.5}, \frac{\pi}{3.5}] \times [-\frac{\pi}{3.5}, -\frac{\pi}{4.5}]\right)$ and 
 $[I_1, \dots, I_{9}]^\top \sim \mathcal{U}([-1, 1]^{9})$ conditioned by  $\lvert \sum_{l=1}^{9} I_l \rvert \leq 1$.

\subsubsection{Analysis of error in estimating the tECV}\label{sec:error_analysis}
Here we study the errors in estimating the tECV for a specified design parameter vector.
For the PACE-based approach, we implement three models: i) using the linear approximation for the CE (see Appendix~\ref{appendix:linear_approximation}), ii) using the ANN for approximating the CE without data augmentation, and iii) using the ANN for approximating the CE and applying the data augmentation method described in Section~\ref{sec:reduced_variance_estimator}.
The ANNs in cases ii and iii have two hidden layers of 100 neurons each.
The input and output layers have 10 and 2 neurons, respectively, which corresponds to the dimensions of vectors $\vy$ and $\vq$, respectively.
Dataset $D_N$ is split into two datasets for training and testing with a size ratio of 1:1.
We use the Adam algorithm \cite{kingma2014adam} with a learning rate of $0.0005$ and a batch size of $100$. 
The algorithm is set to run for a maximum of $10,000$ epochs and features an early-stop mechanism that is activated when the loss function shows no further improvement.
To compute the tECV, we fix $M=N$ based on the Proposal~\ref{proposition:error_estimation}.

As a closed form of the tECV is not available in this example, we employ the IS approach with $200,000$ \emph{i.i.d} samples, and we use the obtained result as the reference, \emph{i.e.}, $V$ in Eq.~(\ref{eq:numMAE}).
Fig.~\ref{fig:eit_approximation_error} depicts the relative MAEs in estimating the tECV for the optimal design found in \cite{Beck2020}. To obtain these estimated relative MAEs, we perform 50 statistically independent simulations to empirically compute the expectation operator in Eq.~(\ref{eq:numMAE}).

\begin{figure}[!ht]
	\centering
	\begin{subfigure}[b]{0.45\textwidth}
		\includegraphics[scale=0.4]{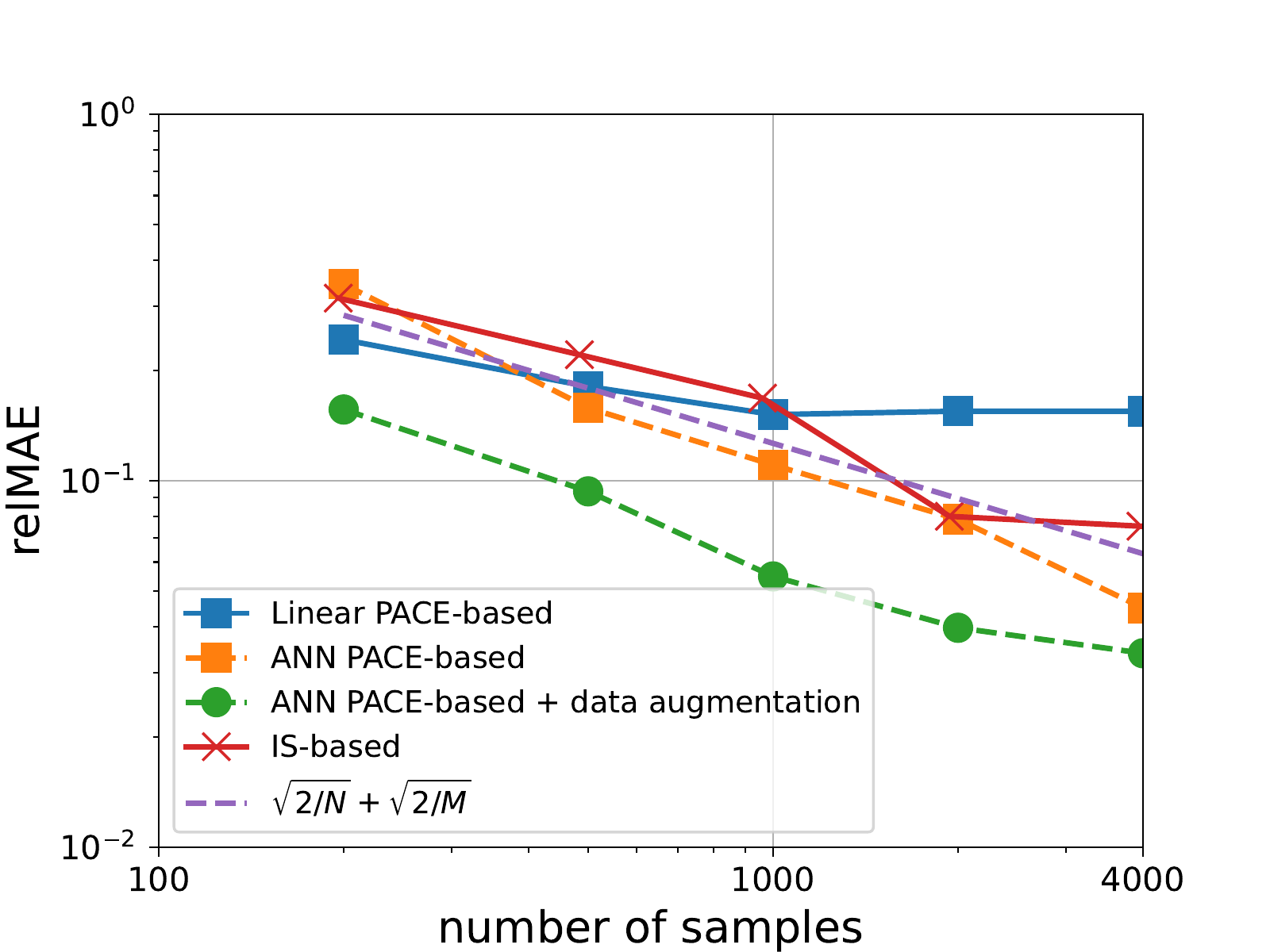}
		\caption{$~$}
	\end{subfigure}
	\begin{subfigure}[b]{0.45\textwidth}
		\includegraphics[scale=0.4]{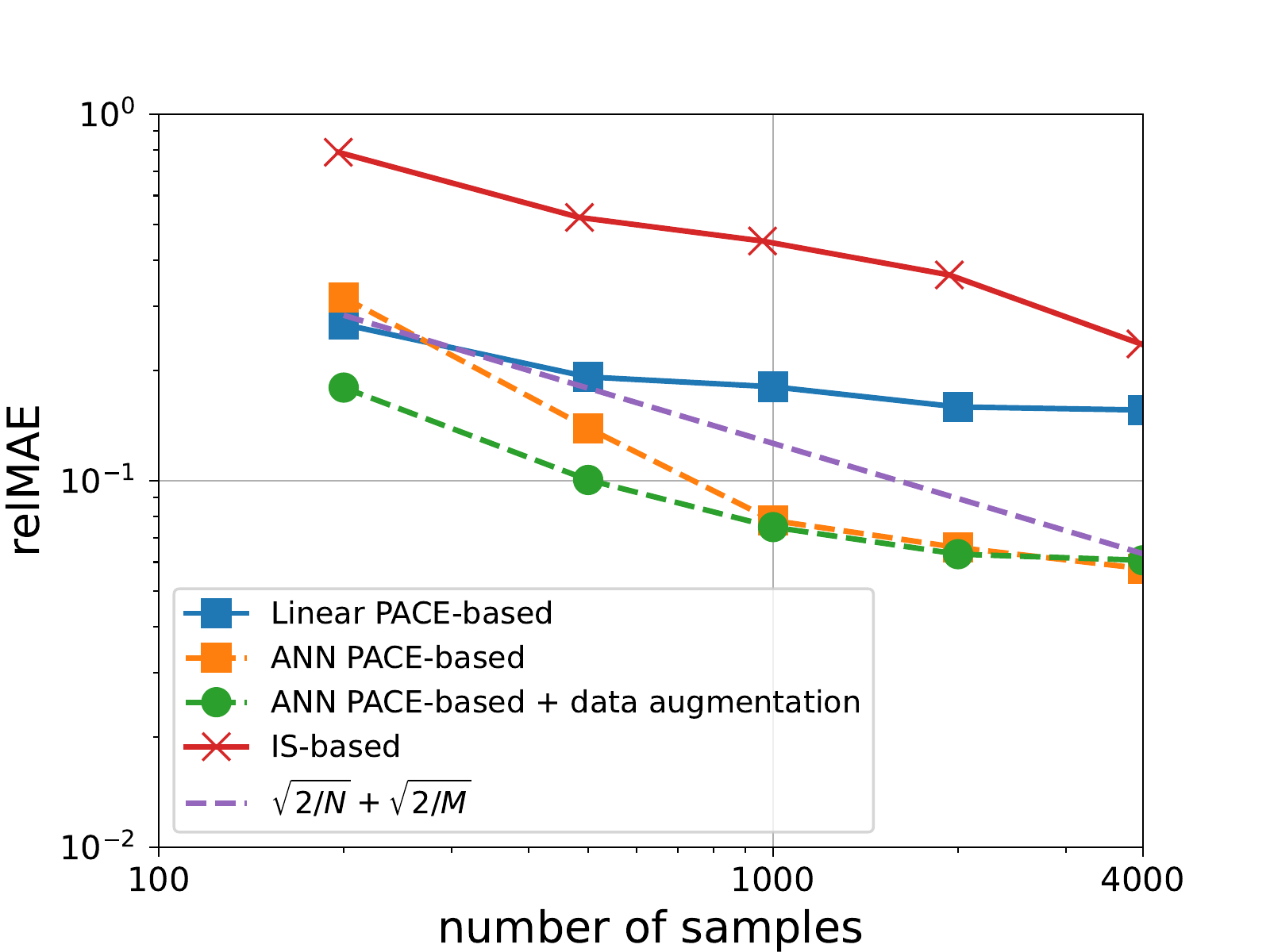}
		\caption{$~$}
	\end{subfigure}
	\caption{Relative MAE for estimating the tECV for $d = d_{\text{optimal}}$
		(a) $\rvXi\sim \mathcal{N} (0, 10^2 \mathbf{I}_{10})$, and (b) $\rvXi\sim \mathcal{N} (0, 3^2 \mathbf{I}_{10})$.
		For the PACE-based approach, the number of samples is equal to $N+M$.
		For the IS-based approach, the number of samples is equal to
		$(N_{\text{i}} + 1) \times N_{\text{o}}$.
		For the sake of simplicity, we fix $N=M$ and $N_{\text{i}} = N_{\text{o}}$. }
	\label{fig:eit_approximation_error}
\end{figure}

Because the measurement map is nonlinear, the CE linear approximation contains bias error, which  results in a  relative MAE of 20\%.
In contrast, we observe negligible bias error when we use the ANN for approximating the CE.
When comparing models ii and iii, we observe that the data augmentation method described in Section~\ref{sec:reduced_variance_estimator} plays a crucial role in reducing the estimation error, particularly for a small number of samples, as shown in Fig.\ref{fig:eit_approximation_error}.
For the case $N+M =200$, applying the data augmentation reduces the relative MAE by more than three-fold for $\rvXi\sim \mathcal{N} (0, 10^2 \mathbf{I}_{10})$.

With the IS-based approach, estimating the tECV requires a double-loop MC simulation, where the numbers of samples for the outer and inner loops are $N_{\text{o}}$ and $N_{\text{i}}$, respectively (see Appendix~\ref{appendix:ll-approach}). 
Here, we set $N_{\text{o}} = N_{\text{i}}$ for simplicity. Optimizing the ratio $N_{\text{o}} : N_{\text{i}}$ is problem dependent and beyond the scope of this study. 
Compared with the IS-based approach, the PACE-based approach  combined with data augmentation significantly decreases the number of required samples.
For example in Fig.~\ref{fig:eit_approximation_error}(a), to reach a relative MAE of 0.9\%,
the IS approach requires more than 4000 samples, while
the PACE-based approach using data augmentation needs only 1000 samples.
Notably, for the case $\rvXi\sim \mathcal{N} (0, 3^2 \mathbf{I}_{10})$ depicted in Fig.~\ref{fig:eit_approximation_error}(b),
although the computational efficiency of the PACE approach does not exhibit a significant change compared with the case in Fig.~\ref{fig:eit_approximation_error}(a), the performance of the IS approach deteriorates substantially. 
In this case, the statistical error of the  IS approach is even greater than that of the PACE approach using the linear approximation.
Moreover, the relative MAE of the PACE approach verifies its theoretical estimation, $\mathcal{O}\bigl(\sqrt{\frac{2}{\pi N}} +\sqrt{\frac{2}{\pi M}} \bigr)$ stated in Proposition~\ref{proposition:error_estimation}.

\subsubsection{Minimization of the tECV}
In this section we apply the algorithms developed in Section~\ref{sec:continuous_domain} to determine the A-optimal DOE. We approximate the CE nonlocally using an ANN having two hidden layers.
The input layers have 19 neurons that correspond to the sum of the dimensions of vectors $\vd\in [-1, 1]^9$ and $\vy\in \sR^{10}$.
The number of neurons in the output layer and each hidden layer are 2 and 100, respectively.
We choose the density of the normal distribution $\mathcal{N} (\text{0}_{9}, 0.2\times \mathbf{I}_{9})$ as
the kernel function used to evaluate the weighted loss function $L_{\kappa}$ in Eq.~(\ref{eq:weighted_mse}).
From this point onward, we keep $\rvXi\sim \mathcal{N} (0, 10^2 \mathbf{I}_{10})$ as was done \cite{CARLON2020}.

The main procedure described in Algorithm~\ref{algorithm:main} is implemented with 20 iterations.
In each iteration, we first use Algorithm~\ref{algorithm:local_approximation_CE} to approximate the CE over 1000 epochs.
We use $N=500$ samples to estimate the weighted MSE $L_{\kappa}$ (see Eq.~(\ref{eq:hat_weighted_mse})), and we  augment the dataset by 30 times using the data augmentation method described in Section~\ref{sec:reduced_variance_estimator}.
Other settings for training the ANN are kept identical to those in Section~\ref{sec:error_analysis}.
Given $\weight$, we execute Algorithm~\ref{algorithm:sgd_design} over 20 epochs using $N=25$ and a single batch.
The dataset is also augmented by 30-fold.
In Algorithm~\ref{algorithm:sgd_design}, the learning rate is gradually reduced from $0.1$ to $0.02$ to reduce the stochastic effect in the final epochs and ensure the convergence of the algorithm.
Algorithm~\ref{algorithm:sgd_design} returns $\vd_{\kappa+1}$, which is then utilized in the subsequent iteration.

For each iteration of the main procedure, Algorithms~\ref{algorithm:local_approximation_CE} and~\ref{algorithm:sgd_design} must evaluate the observational map 500 times. 
Additionally, the latter evaluates the gradient $\nabla_{\vd} h$ 500 more times. 
In total, Algorithm~\ref{algorithm:main} executes $20,000$ evaluations of the observational map and $10,000$ evaluations of the gradient $\nabla_{\vd} h$. 
The total number of evaluations of the observational map and its derivative is of the same order of magnitude as the estimations of the tECV for a single design (see Fig.~\ref{fig:eit_approximation_error}).
Our method gains significant computational efficiency owing to the nonlocal approximation of the CE and the applied transfer learning technique.
The performance of the algorithm in terms of the design vector and the tECV is depicted in Fig.~\ref{figure:opt_convergence}. Convergence can be clearly observed after 15 iterations. 

\begin{figure}[!ht]
	\centering
	\begin{subfigure}[b]{0.49\textwidth}
		\includegraphics[scale=0.44]{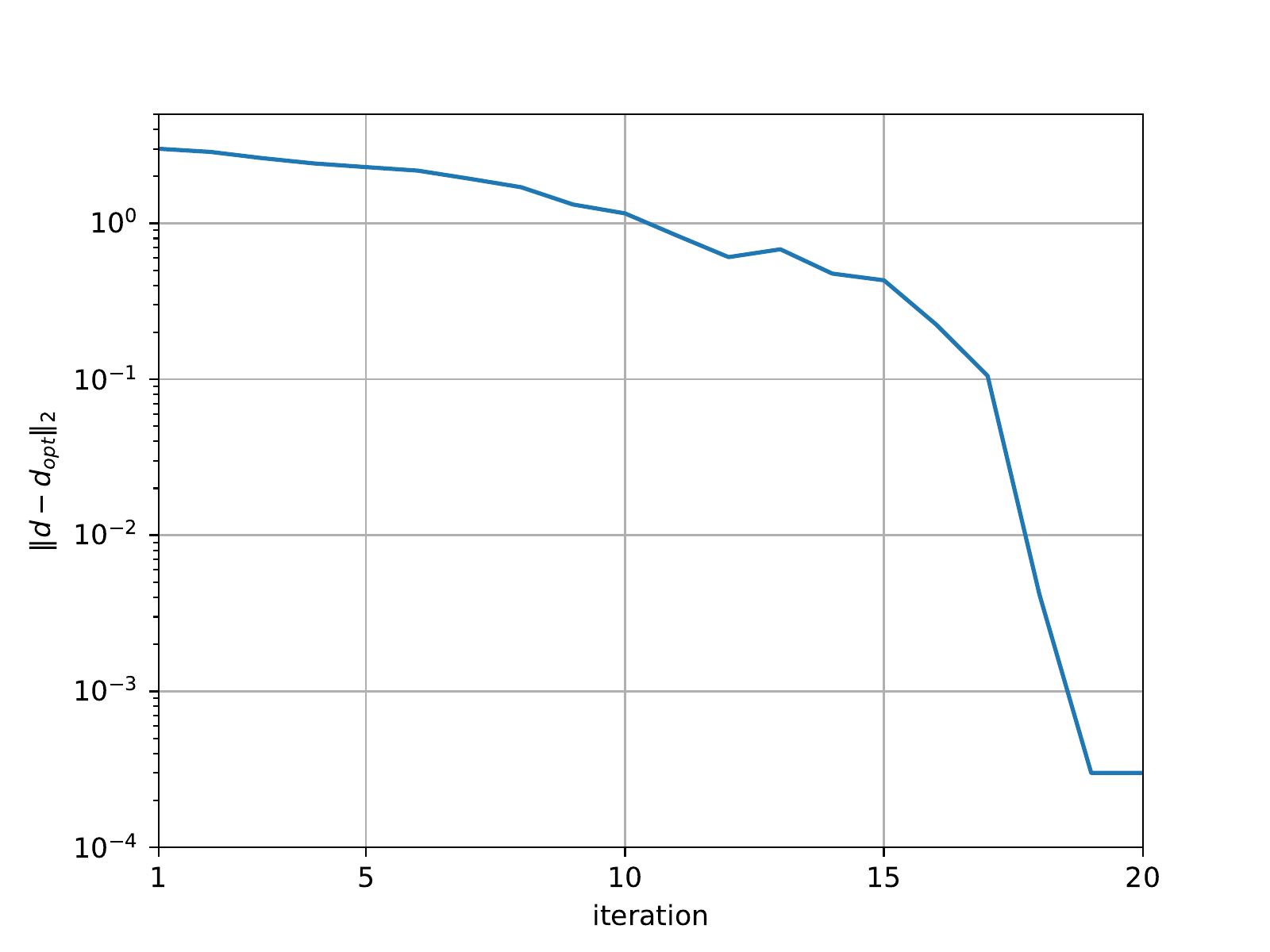}
		\caption{$~$}
	\end{subfigure}
	\begin{subfigure}[b]{0.49\textwidth}
		\includegraphics[scale=0.44]{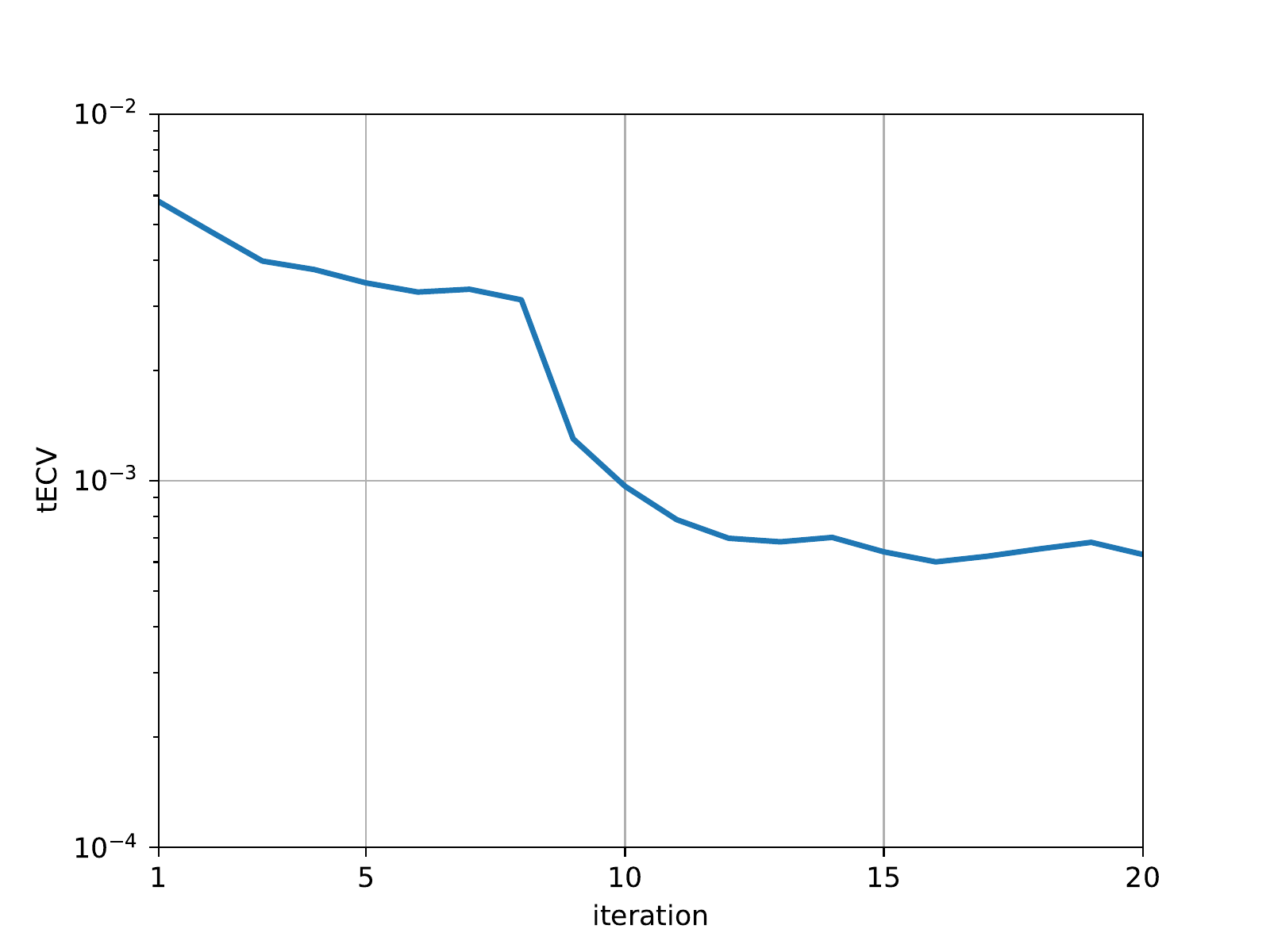}
		\caption{$~$}
	\end{subfigure}
	\caption{Convergence in terms of  (a) the design parameter and (b) the tECV. }
	\label{figure:opt_convergence}
\end{figure}

\begin{figure}[!ht]
	\centering
	\includegraphics[scale=0.2]{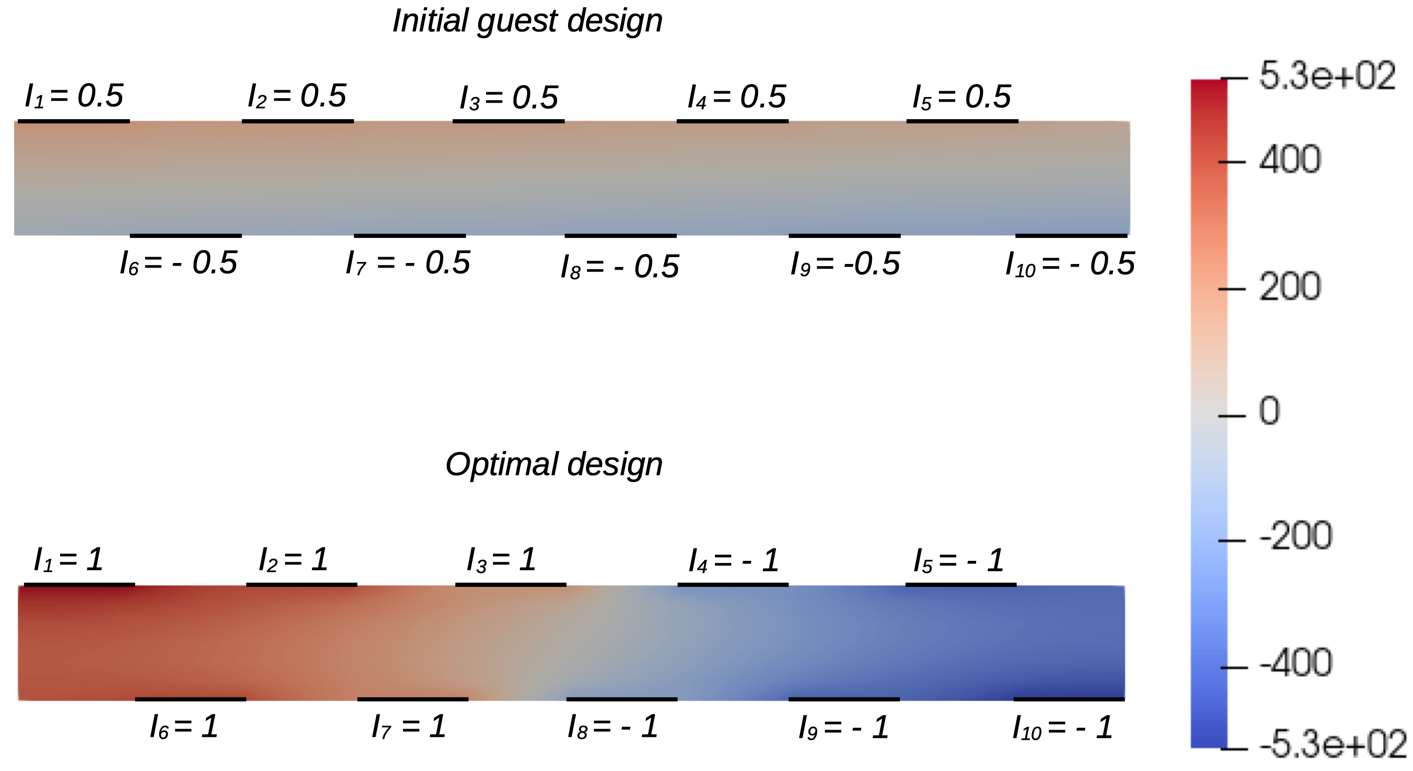}
	\caption{Potential fields
		solved using the FE model for the initial DOE (above) and the A-optimal DOE (below).}
	\label{fig:potential_field}
\end{figure}

\paragraph{A-optimal DOE}
The A-optimal design found by the algorithm closely resembles the solution  $\vd_{\text{A}} = [1,\; 1,\; 1,\; -1, -1,\; 1,\; 1, -1, -1]^{\top}$.
The potential fields solved using the FE model are illustrated in Fig.~\ref{fig:potential_field} for both the initial DOE and the A-optimal DOE.
The magnitude of the potential field under the A-optimal DOE, particularly at the electrodes, is considerably larger than that of the initial DOE. Consequently, the effect of the measurement error on the posterior variance is minimized.
In this example, minimizing the tECV leads to the DOE that is identical to the DOE maximizing the EIG, reported in \cite{CARLON2020}. This observation suggests that the conditional variance and the information gain are strongly correlated.

The typical posterior densities obtained from the initial DOE and the A-optimal DOE are depicted in Fig.~\ref{fig:eit_posteriors}(a) and (b), respectively.
The posterior mean that is inferred using the A-optimal DOE predicts the ground-truth value with a negligible error, whereas the posterior variance is significantly reduced compared with that of the initial DOE.

\begin{figure}[!ht]
	\centering
	\begin{subfigure}[b]{0.49\textwidth}
		\centering
		\includegraphics[scale=0.35]{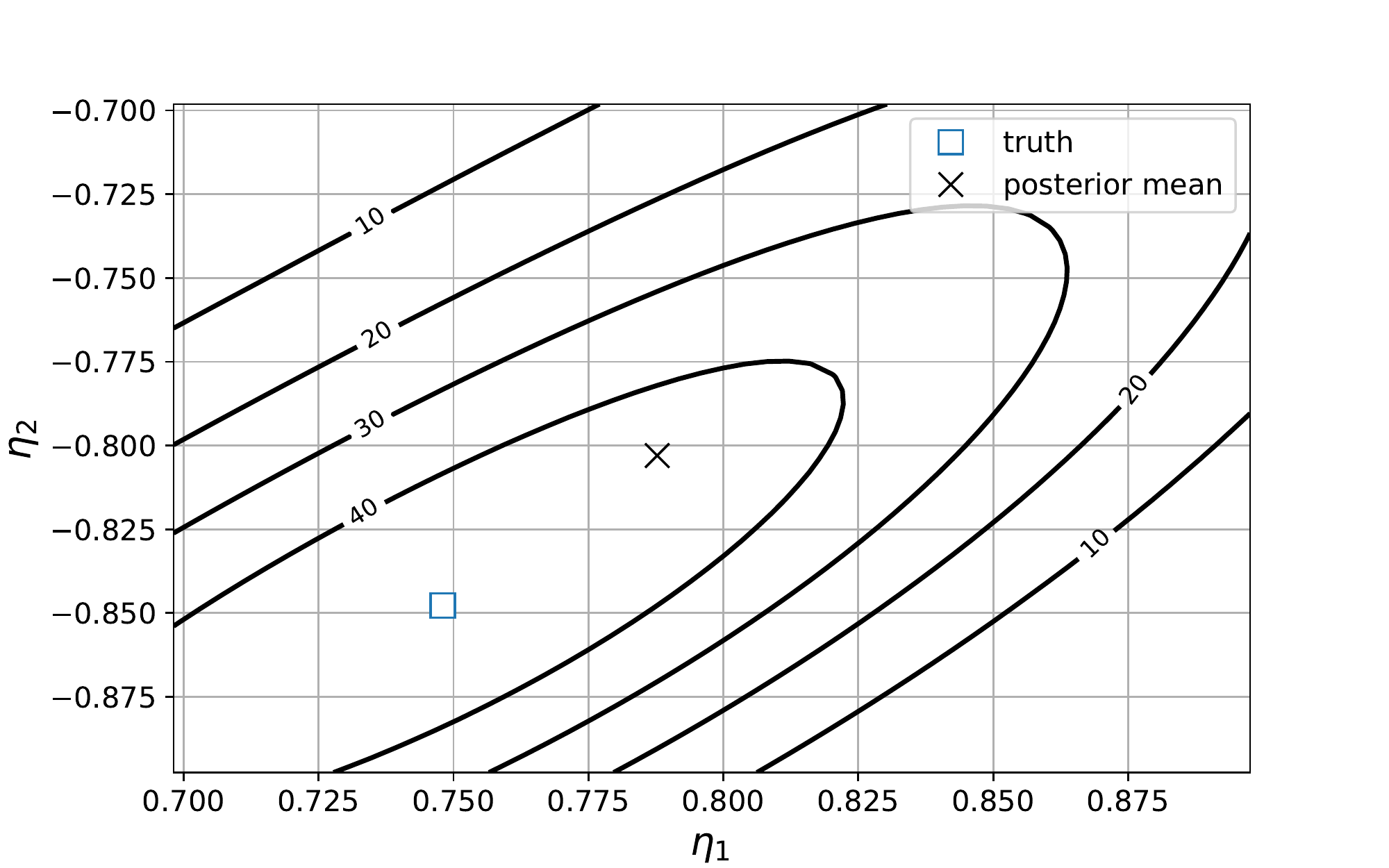}
		\caption{$~$}
	\end{subfigure}
	\begin{subfigure}[b]{0.49\textwidth}
		\centering
		\includegraphics[scale=0.35]{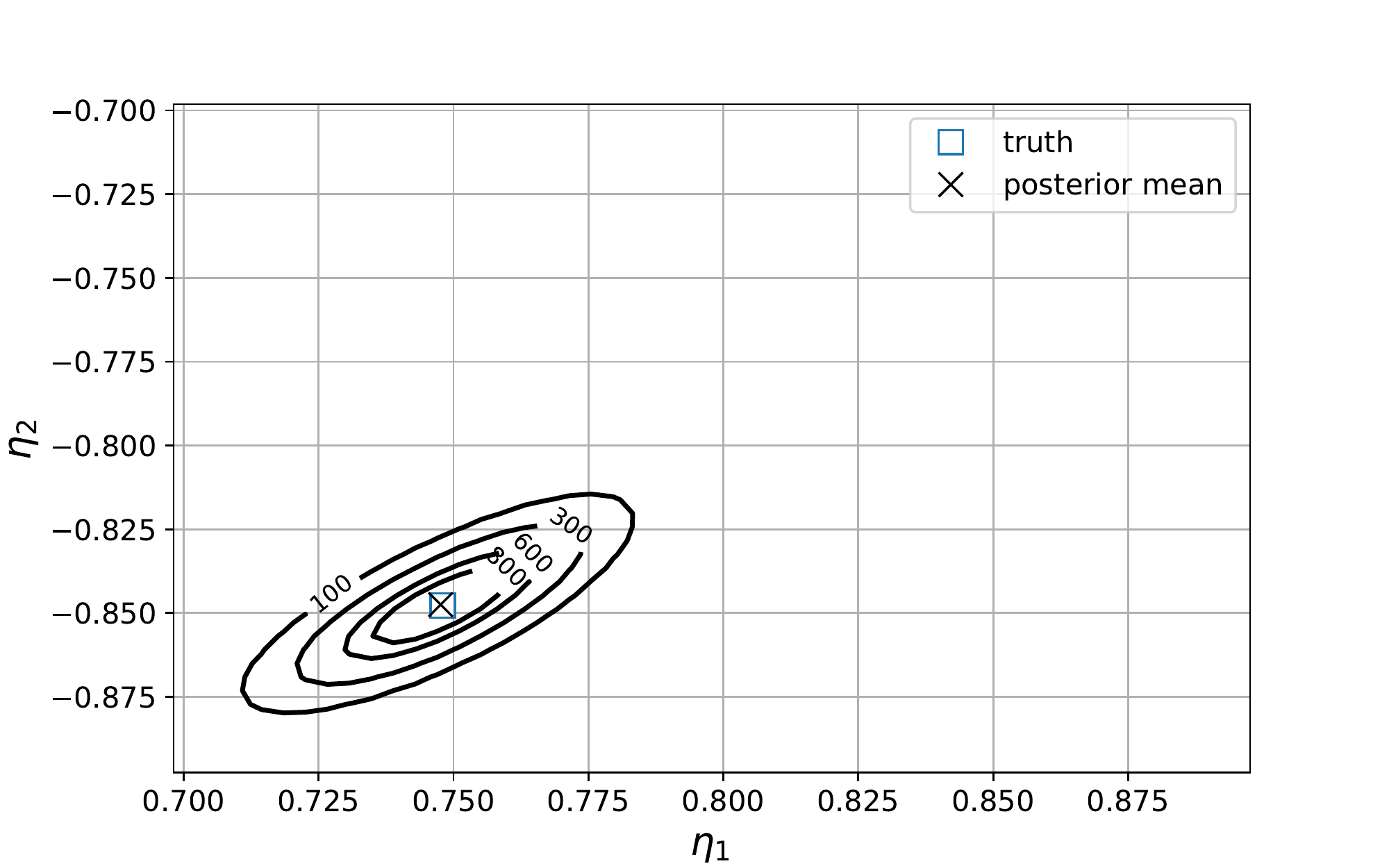}
		\caption{$~$}
	\end{subfigure}
	\caption{Contour plots of the posterior densities and their mean values for (a) initial DOE and (b) A-optimal DOE.
		The ground truth value is $\eta_1 =  0.748$, $\eta_2 =-0.848$.}
	\label{fig:eit_posteriors}
\end{figure}

\section{Conclusion}\label{sec:conclusion}

This study presents an efficient computational method for estimating the tECV within the A-optimal DOE framework using the PACE approach.
The intractability of the posterior distribution does not affect the computational efficiency of our approach,  as both the approximating and the sampling the posterior distribution are excluded.
Moreover, we derive an asymptotic error estimation of our method and verify it through numerical experiments.

To address continuous design domains, we combine the PACE framework with stochastic optimization algorithms to seek the A-optimal DOE.
We demonstrate our method by using the ANN to approximate the CE.
The stochastic optimization algorithms used for optimizing the ANN's weights and for finding the optimal design are integrated, as their loss functions are identical.
 We further propose a nonlocal approximation of the CE to reduce the number of evaluations of the observation map, which can be computationally demanding in practice. Numerical experiments show that our approach requires significantly fewer evaluations of the observational map compared to the crude IS method.

\appendix

\section{Computing tECV using the IS approach}
\label{appendix:ll-approach}

This appendix describes the computation of the ECV using the IS estimator of CE.
This approach requires a double-loop MC simulation.
The inner one estimates the posterior variances using Eq.~(\ref{eq:posterior_variance}),
while the outer one estimates the expected posterior variance  using Eq.~(\ref{eq:totalVar}).

The outer loop for a given design setting $\vd$ is formulated as
\begin{equation}
	\widehat{V}^{\text{IS}} = \dfrac{1}{N_{\text{o}}} \sum_{i=1}^{N_{\text{o}}} \sum_{k=1}^{n}\widehat{\operatorname{Var}} (Q\mid \vy^{(i)}),
	\quad \text{with} \quad \vy^{(i)} = h(\vq^{(i)}, \vd) + \xi^{(i)}, \; i =1, \dots, N_{\text{o}},
\end{equation}
where $\{\vq^{(i)}\}_{1}^{N_{\text{o}}}$ and $\{\xi^{(i)}\}_{1}^{N_{\text{o}}}$ are the $N_{\text{o}}$ \emph{i.i.d.}
samples of the RV $\rvQ$ and $\rvXi$, respectively.
For each sample pair $(\vq^{(i)}, \vy^{(i)})$, an inner loop is performed to estimate the posterior variance as $\widehat{\operatorname{Var}} (Q\mid \vy^{(i)})$
\begin{equation}
	\widehat{\operatorname{Var}} \left[\rvQ\mid \vy^{(i)}\right] =
	 \dfrac{\sum_{j=1}^{N_{\text{i}}} [\vq^{(i, j)}]^{\odot 2} \pi_{\rvXi} \left ( h(\vq^{(i, j)}, \vd) -  \vy^{(i)}\right)}
	{\sum_{j=1}^{N_{\text{i}}}\pi_{\rvXi} \left (   h(\vq^{(i, j)}) -  \vy^{(i)}\right ) } 
	-  \Biggl[ \dfrac{\sum_{j=1}^{N_{\text{i}}} \vq^{(i, j)} \pi_{\rvXi} \left ( h(\vq^{(i, j)}, \vd) -  \vy^{(i)}\right)}
	{\sum_{j=1}^{N_{\text{i}}}\pi_{\rvXi} \left ( h(\vq^{(i, j)}, \vd) -  \vy^{(i)}\right ) } \Biggr]^{\odot 2} ,
\end{equation}
where $\{\vq^{(i, j)}\}_{i=1, j = 1}^{N_{\text{o}}, N_{\text{i}}}$ are $N_{\text{i}} \times N_{\text{o}}$ \emph{i.i.d.} samples of the RV $\rvQ$.
This method requires $(N_{\text{i}}+1)\times N_{\text{o}}$ evaluations of the observational map $h$.

\section{Proofs}

\subsection{Proof of Proposition~\ref{proposition:tecv_via_conditional_expectation}}\label{appendix:proof_totalVar}
\begin{proof}
	By combining the laws of total mean  and total variance of the conditional expectation, \emph{i.e.}
	\begin{equation}
		\begin{aligned}
			\expectation{\expectation{\rvQ_i \mid {\rvY_{\vd}}}} &= \expectation{\rvQ_i},\\
			 \variance{\rvQ_i}  &= \expectation{\variance{\rvQ_i \mid {\rvY_{\vd}}}} + \variance{\expectation{\rvQ_i \mid {\rvY_{\vd}}}}
		\end{aligned}
	\end{equation}
	for $i=1, \dots, \dimq$, we attain
	\begin{align}
		\expectation{\variance{\rvQ_i|{\rvY_{\vd}}}} &= \variance{\rvQ_i}  -  \variance{\expectation{\rvQ_i \mid {\rvY_{\vd}}}} \\
		&= \expectation{\rvQ_i^2
			- \expectation{\rvQ_i}^2}
		-\expectation{\expectation{\rvQ_i \mid {\rvY_{\vd}}}^2 - \bigl(\expectation{\expectation{\rvQ_i \mid {\rvY_{\vd}}}}\bigr ){}^2}\\
		&=\expectation{\rvQ_i^2- \expectation{\rvQ_i \mid {\rvY_{\vd}}}^2}.
	\end{align}
	Hence, the tECV can be formulated as
	\begin{align}
		V(\vd)
		&\equiv \sum_{i=1}^\dimq \expectation{\variance{\rvQ_i|{\rvY_{\vd}}}}\\
		& = \expectation{ \sum_{i=1}^\dimq \rvQ_i^2 -\expectation{\rvQ_i \mid {\rvY_{\vd}}}^2}\\
		& = \expectation{ \left \lVert \rvQ -\expectation{\rvQ \mid {\rvY_{\vd}}}\right \rVert_{2}^2}.
	\end{align}
\end{proof}

\subsection{Proof of Proposition~\ref{proposition:error_estimation}}\label{appendix:proof_error_estimation}

\begin{proof}
	We structure our proof of Proposition \ref{proposition:error_estimation} in three steps.
	
	\emph{Step 1: Estimation of the optimization error $\epsilon_{\text{opt}}$.}
	Following the central limit theorem, \\
	$\sqrt{N}\bigl (\widehat{\mathcal{M}}(f^*\vert D_N)
	- \mathcal{M}(f^*) \bigr)\rightarrow \mathcal {N}(0, \variance { \left \lVert \rvQ - f^*(\rvY_{\vd})\right \rVert_{2}^2})$ as $N  \gg 1$.
	For a RV $X\sim \mathcal{N}(0, \sigma^2)$,
	we have $\expectation{\vert X \vert}= \sqrt{2/\pi}\;\sigma$.
	Consequently, the statistical error of the MC estimator
	$\widehat{\mathcal{M}}(f^*\vert D_N)$ can be asymptotically estimated as
	\begin{equation}\label{eq:e_MC_N}
		\begin{aligned}
			\expectation{
				\left \lvert \widehat{\mathcal{M}}(f^* \mid D_N) - \mathcal{M}(f^*) \right \rvert} &\approx
			\sqrt{\dfrac{2}{\pi N}} \variance {\left \lVert \rvQ - f^*(\rvY_{\vd}) \right\rVert_{2}^2}^{1/2}\\
			&=\mathcal{O}\Bigl(\dfrac{2}{\sqrt{\pi N}} \expectation {\left \lVert \rvQ - f^*(\rvY_{\vd}) \right \rVert_{2}^2}\Bigr),
		\end{aligned}
	\end{equation}
	where we use Assumption \ref{assumption2} to obtain the last step.
	Combining Assumption \ref{assumption3} and the result stated in Eq.~(\ref{eq:e_MC_N}) yields
	the following estimation of the optimization error $\epsilon_{\text{opt}}$
	\begin{equation}\label{eq:opt_error_estimation}
		\epsilon_{\text{opt}} = \mathcal{O}\Bigl(\dfrac{2}{\sqrt{\pi N}} \expectation {\left \lVert \rvQ
			- f^*(\rvY_{\vd}) \right \rVert_{2}^2}\Bigr).
	\end{equation}

	Moreover, using the orthogonal property $\expectation{(\rvQ - f^{*}({\rvY_{\vd}}))^\top f({\rvY_{\vd}}))}=0$
	for every $f \in \mathcal{S'}$, we obtain
	
	\begin{equation}\label{eq:equivalent_opt_error}
		\begin{aligned}
			\epsilon_{\text{opt}}  &= \expectation{\left \lVert \mathcal{M}(f(\cdot;\weight_{D_N}))
				- \mathcal{M}(f^*)  \right \vert} \\
			&= \expectation{\Biggl \vert
				\expectation{\left \lVert \rvQ - f({\rvY_{\vd}}; \weight_{D_N})\right \rVert_{2}^2
					\;  \Big \vert  \; \weight_{D_N}}
				- \expectation{\left \lVert \rvQ - f^*({\rvY_{\vd}})\right \rVert_{2}^2} \Biggr \vert}\\
			&= \expectation{\expectation {\left\lVert f({\rvY_{\vd}}; \weight_{D_N}) - f^*({\rvY_{\vd}}) \right \rVert_{2}^2 \;  \Big \vert  \; \weight_{D_N}}}\\
			&= \expectation{\left \lVert f({\rvY_{\vd}}; \weight_{D_N}) - f^*({\rvY_{\vd}})\right \rVert_{2}^2}.
		\end{aligned}
	\end{equation}
	
	By combining Eqs.~(\ref{eq:opt_error_estimation}) and (\ref{eq:equivalent_opt_error}), we obtain
	\begin{equation}\label{eq:f_err_norm}
		\expectation{\left \lVert f({\rvY_{\vd}}; \weight_{D_N}) - f^*({\rvY_{\vd}})\right \rVert_{2}^2}
		= \mathcal{O}\Bigl(\dfrac{2}{\sqrt{\pi N}} \expectation { \left \lVert \rvQ - f^*(\rvY_{\vd})\right \rVert_{2}^2}\Bigr).
	\end{equation}
	
	\emph{Step 2: Estimation of the MC estimator error $\epsilon_{\text{MC}}$.}
	Following the central limit theorem, for fixed hyperparameters
	$\weight_{D_N}$ as $M \gg 1$, we obtain
	\begin{equation}
	\sqrt{M} \left(
	\widehat{\mathcal{M}}(f(\cdot;\weight_{D_N}) \; \big \vert \;  D_M)
	- \mathcal{M}\left (f({\rvY_{\vd}}; \weight_{D_N}\right) \right)
	\rightarrow \mathcal {N} (0, \variance{\left\lVert \rvQ -
		f({\rvY_{\vd}}; \weight_{D_N}) \right \rVert_{2}^2
		\; \Big \vert \;
		\weight_{D_N}}^{1/2}).
	\end{equation}
	We estimate
	the error $\epsilon_{\text{MC}}$ as $M \to \infty$ using the central limit theorem as
	\begin{equation}\label{eq:MC_error_estimation}
		\begin{aligned}
			\epsilon_{\text{MC}} &\equiv  \expectation {\expectation {\bigl \vert \widehat{\mathcal{M}}(f(\cdot;\weight_{D_N})|D_M)
					- \mathcal{M}(f({\rvY_{\vd}}; \weight_{D_N})) \bigr\vert
					\; \Big \vert \; \weight_{D_N} }}\\
			&\approx \expectation{\sqrt{\dfrac{2}{\pi M}} \variance{\left \lVert \rvQ -
					f({\rvY_{\vd}}; \weight_{D_N}) \right \rVert_{2}^2\; \Big \vert \;
					\weight_{D_N}}^{1/2} } \\
			&=\mathcal{O}\Bigl(\sqrt{\dfrac{2}{\pi M}} \variance{\left \lVert \rvQ -
				f^*({\rvY_{\vd}})\right \rVert_{2}^2}^{1/2}
			\Bigr) \\
			&=\mathcal{O}\left (\dfrac{2}{\sqrt{\pi M}}
			\expectation { \left \lVert \rvQ - f^*(\rvY_{\vd})\right \rVert_{2}^2} \right),
		\end{aligned}
	\end{equation}
	where the third step is obtained owing to $\expectation{\left \lVert f({\rvY_{\vd}};\weight_{D_N})
		- f^* ({\rvY_{\vd}})  \right \rVert_{2}^2} = \mathcal{O}(2/\sqrt{\pi N})$ as stated in Eq.~(\ref{eq:f_err_norm}).
	
	\emph{Step 3.} Eventually, by combining Eqs. (\ref{eq:error_bound}), (\ref{eq:opt_error_estimation}), and (\ref{eq:MC_error_estimation}), we can estimate the error of the estimator $\widehat{V}_{\vd}(D_N, D_M)$ as
	\begin{equation}
		\expectation{\left \lvert \widehat{\mathcal{M}}(f(\cdot;\weight_{D_N})\; \big \vert \; D_M)
			- \mathcal{M}(\phi_d) \right \rvert} = \mathcal{O}\Biggl(\Bigl(\dfrac{2}{\sqrt{\pi N}} + \dfrac{2}{\sqrt{\pi M}}\Bigr) \expectation { \left \lVert \rvQ - f^*(\rvY_{\vd})\right \rVert_{2}^2}\Biggr) + \epsilon_{S'}.
	\end{equation}
\end{proof}

\subsection{Orthogonality property of the CE}

\label{appendix:orthogonal_projection}
\begin{theorem}[Orthogonality property]\label{theo:orthogonal_projection}
	Let $\rvQ$ and $\rvY$ be the finite-variance RVs valued in $ \mathbb{R}^\dimq$ and  $\mathbb{R}^\dimy$, respectively .
	Let $L_2(\sigma_{\rvY})$ be the collection of all random variables of type $g(\rvY)$,
	 where $g: \mathbb{R}^\dimy \rightarrow \mathbb{R}^\dimq$
	  is an arbitrary function satisfying $\expectation{\lVert g(\rvY)\rVert_{2}^2} < \infty$. Then, 
	   \begin{itemize}
	   	\item  [A)]
		\begin{equation}\label{eq:orthogonal}
			\expectation{\; f (\rvY){}^\top \;\bigl(\rvQ - \expectation{\rvQ \mid \rvY}\bigr)\; } =0,
		\end{equation}
	  \item [B)] the CE $\expectation{\rvQ \mid \rvY}$ is the RV in $L_2(\sigma_{\rvY})$
	  that minimizes the MSE, i.e.,
		\begin{equation}
			\begin{aligned}
				\expectation{\rvQ \mid  \rvY} &= \phi  (\rvY),\\
				\text{where} \quad
				\phi  &=  \arg \min_{f (\rvY) \in L_2(\sigma_{\rvY})} \expectation{\left \lVert \rvQ- g(\rvY) \right \rVert_{2}^2}.
			\end{aligned}
		\end{equation}
	\end{itemize}
\end{theorem}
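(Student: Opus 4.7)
The plan is to prove part A first, since part B will reduce to a Pythagorean-style decomposition that uses part A as its key ingredient. For part A, I would start from the defining property of the conditional expectation: for every measurable set $A \in \sigma(\rvY)$, $\int_A \rvQ \, \dint\probability = \int_A \expectation{\rvQ \mid \rvY} \, \dint\probability$. Linearity extends this identity from indicators to simple $\sigma(\rvY)$-measurable functions, and then a standard monotone/dominated convergence argument (using the finite-second-moment hypotheses on $\rvQ$ and on $f(\rvY)$ together with Cauchy--Schwarz to control the products $f(\rvY)^\top \rvQ$ and $f(\rvY)^\top \expectation{\rvQ \mid \rvY}$) extends it to every $f(\rvY) \in L_2(\sigma_{\rvY})$. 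Rearranging gives $\expectation{f(\rvY)^\top(\rvQ - \expectation{\rvQ \mid \rvY})} = 0$, component by component.

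For part B, I would take an arbitrary $g(\rvY) \in L_2(\sigma_\rvY)$ and use the orthogonal decomposition
\begin{equation*}
\rvQ - g(\rvY) = \bigl(\rvQ - \expectation{\rvQ\mid\rvY}\bigr) + \bigl(\expectation{\rvQ\mid\rvY} - g(\rvY)\bigr),
\end{equation*}
expand $\lVert\cdot\rVert_2^2$, and take expectations. The cross term is $2\,\expectation{(\rvQ - \expectation{\rvQ\mid\rvY})^\top(\expectation{\rvQ\mid\rvY} - g(\rvY))}$; since $\expectation{\rvQ\mid\rvY} - g(\rvY)$ is itself an element of $L_2(\sigma_\rvY)$, part A kills this term. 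What remains is
\begin{equation*}
\expectation{\lVert\rvQ - g(\rvY)\rVert_2^2} = \expectation{\lVert\rvQ - \expectation{\rvQ\mid\rvY}\rVert_2^2} + \expectation{\lVert\expectation{\rvQ\mid\rvY} - g(\rvY)\rVert_2^2},
\end{equation*}
which is minimized precisely when $g(\rvY) = \expectation{\rvQ\mid\rvY}$ almost surely, giving the desired characterization with $\phi$ uniquely determined up to $\probability_\rvY$-null sets.

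The main obstacle is the approximation argument in part A: one must verify that $\expectation{\rvQ\mid\rvY}$ lies in $L_2(\sigma_\rvY)$ (which follows from the conditional Jensen inequality applied to $\lVert\cdot\rVert_2^2$ together with $\expectation{\lVert\rvQ\rVert_2^2} < \infty$), and then justify extending the defining identity from indicators of $\sigma(\rvY)$-sets to general $L^2$ functions of $\rvY$ while keeping all integrals absolutely convergent. Once this technical step is in place, part B is essentially a one-line consequence of the Pythagorean identity in $L^2$. A cleaner alternative for readers familiar with Hilbert-space geometry would be to cite directly that $L_2(\sigma_\rvY)$ is a closed subspace of $L_2(\eventalgebra)$ and that $\expectation{\cdot\mid\rvY}$ is the associated orthogonal projection, but proving the two claims from scratch via the measure-theoretic route above keeps the exposition self-contained.
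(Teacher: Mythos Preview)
Your proposal is correct and follows essentially the same route as the paper: for part A you start from the defining identity $\int_A \rvQ\,\dint\probability = \int_A \expectation{\rvQ\mid\rvY}\,\dint\probability$ on indicator functions and pass to general $f(\rvY)\in L_2(\sigma_{\rvY})$ via simple-function approximation, and for part B you use the Pythagorean decomposition with the cross term killed by part A. If anything, your write-up is more careful than the paper's about the integrability justifications (Cauchy--Schwarz, conditional Jensen) needed to make the limiting argument in part A rigorous.
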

\begin{proof}
	For any measurable set
	$B\subset \sR^\dimy$, let $A= \rvY^{f \; -1} (B)\equiv \{ \omega\; : \rvY(\omega) \in B\}$.
	 As $A\in \sigma_{\rvY}$, we achieve
	\begin{equation}\label{eq:orthogonal_simpleFunct}
	\begin{aligned}
	\expectation{\mathbf{1}_B(\rvY) \;\left(\rvQ - \expectation{\rvQ \mid \rvY}\right)} &=\expectation{\mathbf{1}_B (\rvY) \; \rvQ}- \expectation{\mathbf{1}_B(\rvY)\expectation{\rvQ \mid \rvY}}\\
	&= \int_A \rvQ(\omega)\dint \probability (\omega)- \int_A \expectation{\rvQ | \rvY}(\omega) \dint \probability (\omega) \\
	&=0,
	\end{aligned}
	\end{equation}
	where $\mathbf{1}_B(Y(\omega)) = 1$ if $Y(\omega) \in B$ and $0$ otherwise.

	Let $\rvX^\dimq$ be a $\sigma_{\rvY}$-measurable simple RV that converges in terms of distribution to $g\circ\rvY$. Using the result from Eq.~(\ref{eq:orthogonal_simpleFunct}), we prove part A) ot Theorem \ref{theo:orthogonal_projection}.

	Let $Z= \expectation{\rvQ \mid \rvY}- \phi(\rvY)$, we obtain
	\begin{equation}\label{eq:orthogonal_projection_proof}
	\begin{aligned}
	\expectation{\bigl \lVert \rvQ- \phi(\rvY)\bigr\rVert_{2}^2}&= \expectation{\bigl\lVert \rvQ- \operatorname{E}{\bigl[\rvQ \mid \rvY\bigr]}+ Z\bigr\lVert ^2}\\
	&= \expectation{ \bigl \lVert \rvQ- \operatorname{E}{\bigl[\rvQ \mid \rvY\bigr]}\bigr\rVert_{2}^2}+ \expectation{\bigl \lVert Z\bigr\rVert_{2}^2},\\
	\end{aligned}
	\end{equation}
	as the cross-product term vanishes. Eq.~(\ref{eq:orthogonal_projection_proof}) directly shows that \\
	$\expectation{\bigl\lVert \rvQ- \phi(\rvY)\bigr\rVert_{2}^2}$ is minimized when $Z$ is
	 the $n$-dimensional vector of zeros,
	 or $ \expectation{\rvQ \mid \rvY} = \phi(\rvY)$ (part B).
\end{proof}

\section{Linear approximation of the CE}
\subsection{Analytical formulation of the linear approximation}
\label{appendix:linear_approximation}
The linear approximation of the CE $f_{\text{l}}(\rvY_{\vd}) = \mat{A} \rvY_{\vd} +\vek{b}$ is the solution to the least mean-squares problem
\begin{equation}
\mat{A}, \vek{b} = \quad \arg \min_{\mat{A}' \in \sR^{\dimq\times \dimy}, \vek{b}' \in \sR^\dimq}
 \expectation{\bigl\lVert\rvQ-  \mat{A}' \rvY_{\vd}  -\vek{b}'\bigr\rVert_{2}^2}.
\end{equation}
Using the first order necessary conditions
\begin{equation}
\begin{aligned}
\expectation{\rvQ-  \mat{A}\rvY_{\vd} -\vek{b}} &=\vek{0}_{\dimq},\\
\expectation{(\rvQ-  \mat{A}\rvY_{\vd} -\vek{b}) \rvY_{\vd}^\top }&=\mat{0}_{\dimq\times \dimy},
\end{aligned}
\end{equation}
where $\mat{0}_{\dimq \times \dimy}$ denotes the $\dimq \times \dimy$ matrix of zeros, we obtain
\begin{equation}
\begin{aligned}
\mat{A} &=\expectation{\bigl(\rvQ-\operatorname{E}[\rvQ]\bigr)\rvY_{\vd}^{\top}} \expectation{\bigl(\rvY-\operatorname{E}{[\rvY_{\vd}]}\bigr)\rvY^{\top}}^{-1}\\
&=\expectation{\bigl(\rvQ-\operatorname{E}{[\rvQ]} \bigr) {\bigl(\rvY_{\vd}-\operatorname{E}{[\rvY_{\vd}]}\bigr)^{\top}}} \expectation{\bigl(\rvY_{\vd}-\operatorname{E}{[\rvY_{\vd}]}\bigr) {\bigl(\rvY-\operatorname{E}{[\rvY_{\vd}]}\bigr)^{\top}}}^{-1} \\
&=\cov{\rvQ,\rvY_{\vd}}\cov{\rvY_{\vd}}^{-1}, \\
\vek{b} &= \expectation{\rvQ-  \mat{A}\rvY_{\vd}}.
\end{aligned}
\end{equation}

\subsection{Empirical linear approximation of the CE}\label{appendix:linear_approximation_empirical}
Given dataset $D_N = {(\vq^{(i)}, \vy^{(i)})}_{i=1}^N$ of the $N$ \emph{i.i.d.} samples of the RV pair $(\rvQ, \rvY)$,
 the empirical linear approximation of the CE is obtained as
 \begin{equation}
 \widehat{\phi} (\vy) = \widehat{\operatorname{Cov}}[{\rvQ,\rvY_{\vd}}]\;\bigl [\widehat{\operatorname{Cov}}[{\rvY_{\vd}}]\bigr ]^{-1} \vy + \widehat{\vek{b}}
 \end{equation}
where
 \begin{equation}\label{eq:empirical_linear_coefficients}
 \begin{aligned}
    & \widehat{\operatorname{Cov}}[{\rvQ,\rvY_{\vd}}]  = \dfrac{1}{N} \sum_{i=1}^{N} [\vq^{(i)} -
    \overline{\vq}]\,[\vy^{(i)}-\overline{\vy}]^\top, \\
    & \widehat{\operatorname{Cov}}[{\rvY_{\vd},\rvY_{\vd}}]  = \dfrac{1}{N} \sum_{i=1}^{N}
    [\vy^{(i)}-\overline{\vy}]\, [\vy^{(i)}-\overline{\vy}]^\top, \\
    &  \widehat{\vek{b}} = \overline{\vq} -
    \dfrac{1}{N} \sum_{i=1}^{N} \widehat{\operatorname{Cov}}[{\rvQ,\rvY_{\vd}}]\;
    \bigl[\widehat{\operatorname{Cov}}[{\rvY_{\vd}}]\bigr]^{-1} \vy^{(i)}.
  \end{aligned}
 \end{equation}
 Here, $\overline{\vq}$ and $\overline{\vy}$ are empirical means of the RVs $\rvQ$ and $\rvY_{\vd}$, respectively,
 computed as
 \begin{equation}
\overline{\vq}= \dfrac{1}{N} \sum_{i=1}^{N} \vq^{(i)}, \quad \overline{\vy} = \dfrac{1}{N} \sum_{i=1}^{N} \vy^{(i)}.
 \end{equation}
The reduced variance estimators corresponding to those in Eq.~(\ref{eq:empirical_linear_coefficients})
are straightforwardly obtained by replacing the role of the dataset $D_N$ with its augmented version $D_N^{\textnormal{vr}}$ (see
 (Eq.~\ref{eq:data_augmented_sets})).

\section{Reduced variance estimators of MSE and tECV}
\label{appendix:data_augmenting}

We show that the estimator
$\widehat{\mathcal{M}}^{\text{vr}}$ (see Eq.~(\ref{eq:mc_vd}))
 provides an estimation
 with reduced variance compared with the crude MC estimator $\widehat{\mathcal{M}}$ (see
  Eq.~(\ref{eq:crude})).
For a given vector $\vd$, we have
\begin{equation}\label{eq:MC_augmenteddata}
\begin{aligned}
\lim_{a \rightarrow \infty}\widehat{\mathcal{M}}^{\text{vr}} (f \mid D_N)
	&\;=_{\text{a.s.}}\;\dfrac{1}{N} \sum_{i=1,\dots, N}
	\operatorname{E} \Bigl[\bigl \lVert \vq^{(i)} - f \bigl(h(\vq^{(i)}, \vd) + \rvXi\bigr) \rVert_{2}^2 \Bigr] \\
    &=\;\dfrac{1}{N} \sum_{i=1,\dots, N}   \expectation{A(\rvQ, \rvXi)| \rvQ = \vq^{(i)}},
\end{aligned}
\end{equation}
$\probability$-almost surely, where
\begin{equation}
A(\vq, \vek{\xi}) \equiv \bigl \lVert \vq - f\bigl(h(\vq, \vd) + \vek{\xi}\bigr) \bigr\rVert_{2}^2.
\end{equation}
Let $\widehat{\mathcal{M}}^{\text{vr}*}(f)$ denote the right-hand-side term in Eq.~(\ref{eq:MC_augmenteddata}).
Because $\{q^{(i)}\}_{i =1}^N$ are \emph{i.i.d.}
 samples,
we approximately quantify the statistical errors of the estimators
$\widehat{\mathcal{M}}$
and $\widehat{\mathcal{M}}^{\text{vr}*}(f)$, respectively, as
\begin{subequations}
	\begin{align}
\variance{\widehat{\mathcal{M}}(f\vert D_N)- \expectation{A(\rvQ, \rvXi)} } &\approx
 \dfrac{ \variance{A(\rvQ, \rvXi) }}{N}, \\
\variance{\widehat{\mathcal{M}}^{\text{vr}*}(f)- \expectation{A
(\rvQ, \rvXi)} } &\approx
 \dfrac{ \variance{ \expectation{A(\rvQ, \rvXi) \mid \rvQ}}}{N}.
	\end{align}
\end{subequations}
Using the law of total variance,
\begin{equation}
\variance{ A(\rvQ, \rvXi)} =
\variance{ \expectation{A(\rvQ, \rvXi) \mid \rvQ}}
 + \expectation{\variance{A(\rvQ, \rvXi) \mid \rvQ}},
\end{equation}
we obtain
\begin{equation}
\variance{ \expectation{A(\rvQ, \rvXi) \mid \rvQ}}
\leq  \variance{ A(\rvQ, \rvXi)}.
\end{equation}
Therefore, for $a\gg 1$, $\variance{\widehat{\mathcal{M}}^{\text{vr}}(f \mid  D_N)
- \expectation{A(\rvQ, \rvXi)}}
 \leq \variance{\widehat{\mathcal{M}}(f \mid D_N)- \expectation{A(\rvQ, \rvXi)}}$.

\section{Gradient of the weighted MSE w.r.t. design parameters}
\label{appendix:gradients}
  The gradient
   $\nabla_{\vd} \Bigl[ \left\lVert \vq - f_{\text{N}}(h(\vq)+\xi,\vd) \right \rVert_{2}^2\Bigr]$ is expanded using the chain rule as
    \begin{equation}
    \begin{aligned}
        \nabla_{\vd}
		     \Bigl[\left \lVert q - f_{\text{N}}(h(\vq, \vd) +\xi,\vd) \right \rVert_{2}^2\Bigr]
		     = -2 & \Bigl[ \nabla_{\vd} \, f_{\text{N}}(\vy,\vd) +
		            \nabla_{\vy} \, f_{\text{N}}(\vy,\vd) \, \nabla_{\vd} \, h(\vq,\vd) \Bigl]^\top\\
		            & \Bigl[q - f_{\text{N}}(h(\vq,\vd) +\xi,\vd) \Bigr],
        \end{aligned}
    \end{equation}
where $\vy = h(\vq, \vd) +\xi$.
    Notably, for ANN $f_{\text{N}}$, the Jacobian matrices $\nabla_\vd f_{\text{N}}$ and $\nabla_\vy f_{\text{N}}$ can be directly obtained numerically
     using the backpropagation \cite{GrieWal08}.
    We finally obtain the value of
    $\nabla_{\vd} \Bigl[ \left \lVert \vq - f_{\text{N}}(h(\vq)+\xi,\vd) \right \rVert_{2}^2\Bigr](q^{(i)}, \vy^{(i,j)} ,\vd_0)$ as
    \begin{equation}
        \begin{aligned}
        \nabla_{\vd}
		     \Bigl[\left \lVert q - f_{\text{N}}(h(\vq, \vd) +\xi,\vd) \right \rVert_{2}^2\Bigr]&(\vq^{(i)}, \vy^{(i,j)},\vd_0)
		     = \\
		        -2& \Bigl[ \nabla_{\vd} \, f_{\text{N}}(\vy^{(i,j)},\vd_0)  + 
		        \phantom{\Bigl(} \nabla_{\vy} \, f_{\text{N}}(\vy^{(i,j)},\vd_0) \,  \nabla_{\vd} \,h(\vq^{(i)},\vd_0)  \Bigr]^\top\\
		        & \Bigl[\vq^{(i)} - f_{\text{N}}(\vy^{(i,j)},\vd_0) \Bigr].
		\end{aligned}
    \end{equation}

\section{ Derivation of the finite element formulation}\label{appendix:fem}

We define $\mathcal{H}:= H^1({B}\times\mathbb{R}^{N_{\rm{el}}})$ as the space of the solution for the potential field $(u(\omega),{U}(\omega))$ for $\omega\in\Omega$ and the Bochner space $L_{\mathbb{P}}^2(\Omega;\mathcal{H})$ as
\begin{equation}
  L_{\mathbb{P}}^2(\Omega;\mathcal{H}):= \left\{(u,{U}):\Omega\to\mathcal{H} \quad \text{s.t.}\, \int_{\Omega}\lVert (u(\omega),{U}(\omega))\rVert_{\mathcal{H}}^2\dint{}\mathbb{P}(\omega)<\infty\right\},
\end{equation}
where $U (\omega)=[U_1 (\omega), \dots, U_{N_{\rm{el}}}(\omega)]^\top$.
For the bilinear form $\mathcal{B}:\mathcal{H}\times\mathcal{H}\to\sR$, which is given as
\begin{equation}
  \mathcal{B}((u,{U}),(v,{V})):= \int_{\mathcal{D}} \sigma \, \nabla u \cdot \nabla v\dint{}{B}+\sum_{l=1}^{N_{\rm{el}}}\frac{1}{z_l}\int_{E_l}({U}_l-u)({V}_l-v)\dint{}E_l,
\end{equation}
 we aim to determine $(u,{U})\in L_{\mathbb{P}}^2(\Omega;\mathcal{H})$ such that the weak formulation
\begin{equation}
  	\mathcal{B}\left((u(\omega),{U}(\omega)),(v,{V})\right)= \sum_{l=1}^{N_{\rm{el}}} I_l \, U_l(\omega)
\end{equation}
is fulfilled for all $(v,{V})\in L_{\mathbb{P}}^2(\Omega;\mathcal{H})$ almost surely.

\bibliography{bibliography.bib}
\bibliographystyle{unsrt}
\end{document}